\newcommand{\fh}{\mathfrak h}
\newcommand{\cA}{\mathcal A}
\newcommand{\cF}{\mathcal F}
\newcommand{\cM}{\mathcal M}
\newcommand{\cX}{\mathcal X}
\renewcommand{\AA}{\mathbb A}
\newcommand{\BB}{\mathbb B}
\newcommand{\CC}{\mathbb C}
\newcommand{\NN}{\mathbb N}
\newcommand{\QQ}{\mathbb Q}
\newcommand{\RR}{\mathbb R}
\newcommand{\UU}{\mathbb U}
\newcommand{\ZZ}{\mathbb Z}
\newcommand{\Ga}{{\mathbb G}_a}
\newcommand{\Gm}{{\mathbb G}_m}
\theoremstyle{theorem}
\newtheorem{theorem}{Theorem}[section]
\newtheorem{lem}[theorem]{Lemma}
\newtheorem{cor}[theorem]{Corollary}
\newtheorem{prop}[theorem]{Proposition}
\newtheorem{question}[theorem]{Question}
\newtheorem{problem}[theorem]{Problem}
\theoremstyle{definition}
\newtheorem{Def}[theorem]{Definition}
\newtheorem{nota}[theorem]{Notation}
\newtheorem{propdef}[theorem]{Proposition/Definition}
\theoremstyle{remark}
\newtheorem{Rk}[theorem]{Remark}
\begin{document}

\title{Algebraic differential equations from covering maps}
\author{Thomas Scanlon} \thanks{Partially supported by NSF grant DMS-1001556.  This paper was completed while
the author was in residence at MSRI during the Spring 2014 semester for the Model Theory and Number Theory program
which was partially supported by NSF grant 0932078 000.  The author thanks D. Bertrand, A. Buium, J. Freitag, B. Mazur, Y. Peterzil,
M. Singer, and S. Starchenko for their very helpful discussions about this work.}
\email{scanlon@math.berkeley.edu}
\address{University of California, Berkeley \\
Department of Mathematics \\
Evans Hall \\
Berkeley, CA 94720-3840 \\
USA}
\begin{abstract}
Let $Y$ be a complex algebraic variety, $G \curvearrowright Y$ an
action of an algebraic group on $Y$, $U \subseteq Y(\CC)$ a
complex submanifold, $\Gamma < G(\CC)$ a discrete, Zariski dense
subgroup of $G(\CC)$ which preserves $U$, and $\pi:U \to X(\CC)$
an analytic covering map of the complex algebraic variety $X$ expressing
$X(\CC)$ as $\Gamma \backslash U$.  We note that the theory of
elimination of imaginaries in differentially closed fields
produces a generalized Schwarzian derivative
$\widetilde{\chi}:Y \to Z$ (where $Z$ is some algebraic variety)
expressing the quotient of $Y$ by the action of the constant
points of $G$.  Under the additional hypothesis that
the restriction of $\pi$ to some set containing a fundamental
domain is definable in an o-minimal expansion of the real field,
we show as a consequence of the Peterzil-Starchenko o-minimal
GAGA theorem that the \emph{prima facie} differentially analytic
relation $\chi := \widetilde{\chi} \circ \pi^{-1}$ is
a well-defined, differential constructible function.
The function $\chi$
nearly inverts $\pi$ in the sense that for any differential
field $K$ of meromorphic functions, if $a, b \in X(K)$ then
$\chi(a) = \chi(b)$ if and only if after suitable restriction there
is some $\gamma \in G(\CC)$ with $\pi(\gamma \cdot \pi^{-1}(a)) = b$.
\end{abstract}

\maketitle

\section{Introduction}

As is well-known, the complex exponential function $\exp:\CC \to \CC^\times$
admits a local analytic inverse, the logarithm function, but the logarithm
cannot be made into a globally defined analytic function. The ambiguity
in the choice of a branch of the logarithm comes from addition by an element of
the discrete group $2 \pi i \ZZ$.  Hence, if we regard the logarithm as
acting on meromorphic function via $f \mapsto \log \circ f$, then while the operator
$f \mapsto \log(f)$ is not well-defined, the logarithmic derivative,
$f \mapsto \frac{d}{dz} (\log(f))$ is.   Of course, more is true in that the
logarithmic derivative is given by the simple differential algebraic
formula $\frac{d}{dz}(\log(f)) = \frac{f'}{f}$.

That the logarithmic derivative is a differential rational function
admits various proofs ranging from a direct computation, to Kolchin's
general theory of logarithmic differentiation on algebraic groups~\cite{Kolchin-DAAG}, to the
techniques we employ in this paper.   In Section~\ref{examplessect} we discuss the algebraic
construction of the logarithmic derivative in detail.

The purpose of this paper is to show that under very general hypotheses, differential
analytic operators constructed by inverting analytic covering maps and then applying
differential operators to kill the action of the constant points of some algebraic group
are actually differential algebraic.  Let us describe more precisely what we have in mind.
We are given an algebraic group $G$ over $\CC$, a complex algebraic variety $Y$, a
regular action of $G$ on $Y$, a complex submanifold $U \subseteq Y(\CC)$ of $Y(\CC)$,
a Zariski dense subgroup $\Gamma < G(\CC)$ for which $\Gamma$ preserves $U$ and an analytic covering
map $\pi:U \to X(\CC)$ expressing the complex algebraic variety $X$ as the quotient
$\Gamma \backslash U$.  Because $\pi$ is a covering map, the inverse
$\pi^{-1}:X(\CC) \to U$ is a many-valued analytic function, well-defined up to the action
of $\Gamma$.  Using the theory of elimination of imaginaries in differential
fields, we produce a differential constructible function, which we call a generalized logarithmic
derivative associated to $\pi$, $\widetilde{\chi}:Y \to Z$
(where $Z$ is an algebraic variety) so that for any differential field $M$ having field of
constants $\CC$ and points $a, b \in Y(M)$ one has $\widetilde{\chi}(a) = \widetilde{\chi}(b)$ if and only if
there is some some $\gamma \in G(\CC)$ with $\gamma \cdot a = b$.  Hence, the
differential analytic operator $\chi := \widetilde{\chi} \circ \pi^{-1}$ gives
a well defined function $X(M) \to Z(M)$ for $M$ any field of meromorphic functions.

Under a mild hypothesis on $\pi$, namely that there is an o-minimal expansion
of $\RR$ in which there is a definable subset $F \subseteq Y(\CC)$ for which
the restriction of $\pi$ is definable and surjective onto $X(\CC)$, we then deduce from
a remarkable theorem of Peterzil-Starchenko that the \emph{a priori} differential
\emph{analytically} constructible function $\chi$ is in fact differential \emph{algebraically}
constructible.    This definability hypothesis holds in many cases of interest, such as for the
covering maps associated to moduli spaces of abelian varieties and of the universal families
of abelian varieties over these moduli spaces.

Our work on this problem was motivated by our attempt to understand Buium's construction
of differential rational functions on moduli spaces of abelian varieties whose fibres
are finite dimensional differential varieties containing the isogeny classes encoded by such moduli points~\cite{Buium}.
His construction is algebraic in the style of Kolchin's construction of logarithmic derivations,
though much more sophisticated.   Buium's maps are differential rational, meaning that they have
a non-trivial indeterminacy locus; ours are defined everywhere, though they are intrinsically
differential constructible, meaning that they are only piecewise given by differential regular
functions.  We seek an algebraic interpretation, for example in terms of a variant of the notion
of $\delta$-Hodge structure, for our maps on Buium's indeterminacy locus.  More generally,
knowing that the map $\chi$ is differential algebraic, one expects a direct algebraic
construction.  We speculate about this in Section~\ref{cq}.

This paper is organized as follows.  We begin in Section~\ref{prelim}
by recalling some of the basics of differential algebra, complex analysis
and especially the Peterzil-Starchenko theory of o-minimal complex analysis.
In Section~\ref{mt} we state precisely and prove our main theorem.  In Section~\ref{theorygld} we develop some of the
basic theory of generalized logarithmic derivatives.  In Section~\ref{examplessect}
we discuss specific examples of covering maps to which our main theorem applies.  In particular,
we note the existence of differential constructible functions whose fibres are the Kolchin closures of
isogeny classes in moduli spaces of abelian varieties.
We also discuss the problem of extending our main theorem to the context of Picard-Fuchs equations 
associated to families of varieties and to the analytic construction of Manin homomorphisms for nonconstant
abelian varieties coming from families of
covering maps. We close in Section~\ref{cq} with some natural questions.

\section{Preliminaries}
\label{prelim}

As our main theorem involves a comparison of three different kinds of structures,
namely, complex manifolds, differential algebraic functions, and o-minimally definable sets,
it should come as no surprise that sometimes the same object may be considered differently
in each of these domains.   In this section we establish our notation and explain how these
theories interact with each other.

\subsection{Jet spaces}

We shall use a construction of higher order tangent spaces which goes variously under the
names of spaces of jets, arc spaces, and prolongation spaces.  We begin by recalling the
differential geometric jet spaces modified slightly for the complex analytic category.
The reader can find details of the space of jets construction in Chapter 12 of~\cite{Bourbaki},
though the discussion there covers only jets of maps from one manifold to another while
we shall consider jets of germs.    The extension to our case is routine.

We begin with our notation for balls and polydiscs.

\begin{nota}
We denote the unit disc in the complex plane by
$$\BB := \{z \in \CC ~:~ |z| < 1 \} \text{ .}$$  For a natural number $n$, the
polydisc
$$\BB^n := \{ (z_1,\ldots,z_n) \in \CC^n ~:~ |z_i| < 1 \text{ for } i \leq n \}$$
is the $n^\text{th}$ Cartesian power of $\BB$.  More generally, for $0 < r < 1$
we denote the disc of radius $r$ centered at the origin by $\BB_r$ and its $n^\text{th}$
Cartesian power by $\BB_r^n$.
\end{nota}

Let us recall the construction of the space of jets.

\begin{Def}
 If $M$ is a complex manifold and
$f:\BB_r^n \to M$ and $g:\BB_s^n \to M$ are two analytic maps into $M$ from polydiscs
of some radii $r$ and $s$ and $m$ is a natural number, then we say that $f$ and $g$ have the
same jets up to order $m$ if there is a coordinate neighborhood $U \ni f(0) = g(0)$ with
$\eta:U \cong \BB^{\dim(M)}$ so that for each $j \leq \dim(M)$ and each
multi-index $\alpha = (\alpha_1,\ldots,\alpha_n)$ with $|\alpha| = \sum \alpha_i \leq m$
one has $\frac{{\partial}^{|\alpha|}}{\partial z_1^{\alpha_1} \cdots \partial z_n^{\alpha_n}} (\eta_j \circ f) (0) =
 \frac{\partial^{|\alpha|}}{\partial z_1^{\alpha_1} \cdots \partial z_n^{\alpha_n}} (\eta_j \circ g) (0)$.
We write $[f]_m$ for the equivalence class of $f$ with respect to the equivalence relation of having
the same jets up to order $m$.  Note that $[f]_m$ depends only on the germ of $f$ at the origin.
\end{Def}

The set of equivalence classes of maps from $n$-dimensional polydiscs into $M$ may be given the structure
of a complex manifold, $J_{m,n}(M)$, and comes equipped with an analytic map $J_{m,n}(M) \to M$ making
$J_{m,n}(M)$ into an affine bundle over $M$.
The construction is functorial in that if $\phi:M \to N$ is
 a map of complex manifolds, then there is an induced morphism $J_{m,n}(\phi):J_{m,n}(M) \to J_{m,n}(N)$
 given by $J_{m,n}([f]_m) := [\phi \circ f]_{m,n}$.

The space of jets construction has an algebraic geometric counterpart with the notion of arc spaces.
For an introduction to arc spaces see~\cite{DenefLoeser} in which what we call \emph{arc spaces} are called
\emph{jet spaces}.  For details of these constructions at the level of generality
we use in this paper, see~\cite{MoosaScanlon}.

\begin{propdef}
If $X$ is a scheme over $\CC$, then for each $n$ and $m$ the functor from ${\mathbb C}$-algebras to sets given
by $R \mapsto X(R[\epsilon_1,\ldots,\epsilon_n]/(\epsilon_1,\ldots,\epsilon_n)^{m+1})$
is represented by a scheme $\cA_{m,n}(X)$.  In the literature, this construction is usually limited to the
case of $n = 1$ for which $\cA_{m,1}(X) =: \cA_m(X)$ is the \emph{$m^\text{th}$ arc bundle} of $X$.
\end{propdef}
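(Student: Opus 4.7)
The plan is to establish representability first in the affine case by an explicit construction, then patch together to handle a general scheme $X$ via gluing along an affine open cover. Throughout, abbreviate $R_{m,n} := R[\epsilon_1,\ldots,\epsilon_n]/(\epsilon_1,\ldots,\epsilon_n)^{m+1}$, and note that as an $R$-module $R_{m,n}$ is free of finite rank equal to $N := \binom{n+m}{n}$, with basis given by the monomials $\epsilon^\alpha$ with $|\alpha|\leq m$.

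First, suppose $X = \Spec A$ is affine with a presentation $A = \CC[x_1,\ldots,x_r]/(f_1,\ldots,f_s)$. A $\CC$-algebra map $A \to R_{m,n}$ is determined by choosing elements $\xi_i := \sum_{|\alpha|\leq m} a_{i,\alpha} \epsilon^\alpha \in R_{m,n}$, one for each generator $x_i$, subject to $f_j(\xi_1,\ldots,\xi_r) = 0$ in $R_{m,n}$ for each $j$. Expanding these relations and reading off the coefficient of each monomial $\epsilon^\beta$ (for $|\beta|\leq m$) gives a system of polynomial equations in the variables $a_{i,\alpha}$ with coefficients in $\CC$; call these polynomials $F_{j,\beta}$. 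Then $\cA_{m,n}(X) := \Spec \CC[a_{i,\alpha}]_{1\leq i\leq r,|\alpha|\leq m}/(F_{j,\beta})$ represents the functor on affine $X$ by construction, and the assignment is functorial in $A$.

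Next, I would verify two compatibility properties on which the gluing rests. First, the formation of $\cA_{m,n}$ commutes with base change along open immersions in the sense that if $U \hookrightarrow X$ is an open immersion of affines, then $\cA_{m,n}(U) \to \cA_{m,n}(X)$ is an open immersion and identifies $\cA_{m,n}(U)$ with the preimage of $U$ under the natural projection $\cA_{m,n}(X) \to X$ (obtained by setting all $\epsilon_i = 0$). This is because a morphism $\Spec R_{m,n} \to X$ factors through $U$ if and only if the underlying reduced morphism $\Spec R \to X$ does, since $\Spec R_{m,n}$ and $\Spec R$ share the same underlying topological space (the ideal $(\epsilon_1,\ldots,\epsilon_n)$ is nilpotent). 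Second, the functor is a Zariski sheaf on the category of $\CC$-algebras, for the same reason: a morphism $\Spec R_{m,n} \to X$ is determined locally on $\Spec R$, and the gluing of morphisms descends through the nilpotent thickening.

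Given these two properties, one handles general $X$ by choosing an affine open cover $\{U_i\}$ of $X$, constructing the affine schemes $\cA_{m,n}(U_i)$ as above, and gluing them along the open subschemes $\cA_{m,n}(U_i \cap U_j)$ using the open-immersion identifications from the previous paragraph. The cocycle condition on triple overlaps follows from functoriality. The resulting scheme $\cA_{m,n}(X)$ represents the desired functor: a morphism $T \to \cA_{m,n}(X)$ from a test scheme $T$ corresponds locally on $T$ to morphisms $T \times_{\cA_{m,n}(X)} \cA_{m,n}(U_i) \to \cA_{m,n}(U_i)$, i.e., to morphisms $\Spec \mathcal{O}_T(\cdot)_{m,n} \to U_i$, and these local data glue uniquely to a morphism $\Spec \mathcal{O}_T(\cdot)_{m,n} \to X$ by the sheaf property. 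The main (and only) point requiring care is the verification that $\cA_{m,n}$ sends open immersions to open immersions compatibly with the projection to $X$, since everything else is a routine application of descent along a Zariski cover.
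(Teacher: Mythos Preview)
Your argument is correct and is the standard construction of arc/jet schemes: representability in the affine case by writing out the equations explicitly, followed by gluing over an affine cover using the fact that $\Spec R_{m,n}$ and $\Spec R$ have the same underlying space (the ideal $(\epsilon_1,\ldots,\epsilon_n)$ being nilpotent), so that factoring through an open is detected at the level of $R$.

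However, note that the paper does not actually supply a proof of this Proposition/Definition. It simply records the statement and refers the reader to \cite{DenefLoeser} for the classical case $n=1$ and to \cite{MoosaScanlon} for the general prolongation-space framework used here. What you have written is essentially the argument one finds in those references (or in any treatment of Weil restriction along a finite free extension, of which this is a special case since $R_{m,n}$ is free of rank $\binom{n+m}{n}$ over $R$). One small point worth tightening: when you glue along $U_i \cap U_j$ you implicitly assume these intersections are affine, which requires $X$ separated; for general $X$ one either refines the cover further or observes directly that the open-immersion compatibility you proved already shows $\cA_{m,n}(U) = \pi^{-1}(U)$ for \emph{any} open $U \subseteq X$, affine or not, once $\cA_{m,n}(X)$ is known to exist locally.
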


\begin{Rk}
We shall use the more general $n$ to discuss partial differential equations.   To ease readability, when no confusion
would arise we shall
suppress the subscript $n$.
\end{Rk}

In general, even if $X$ is an algebraic variety, it may happen
that $\cA_{m}(X)$ is non-reduced.  However, when $X$ is a smooth variety, one has $\cA_{m} (X)(\CC) = J_{m} ( X(\CC))$
and we will need the arc bundle construction only in the case of smooth varieties.

\subsection{Differential algebra}

Let us recall some of the basics of differential algebra for which the book~\cite{Kolchin-DAAG} is the
standard reference.  For an introduction to the model theory of
differential fields, see~\cite{MMP}.

\begin{Def}
A \emph{differential ring} $(K,\Delta)$ is a (commutative) ring $K$ given together with a finite
sequence $\Delta = \langle \partial_1, \ldots, \partial_n \rangle$ of commuting derivations.  If $K$ is a
field, then we call $(K,\Delta)$ a \emph{differential field}. Generally, we write $K$ for the tuple $(K,\Delta)$.
When $|\Delta| = 1$, we call $K$ an \emph{ordinary differential field} and otherwise $K$ is a \emph{partial
differential field}.  In what follows we shall work only with differential fields of characteristic zero
and the phrase \emph{differential field} shall mean \emph{differential field of characteristic zero}.
A map of differential rings $f:(A,\partial_1,\ldots,\partial_n) \to (B,\partial_1,\ldots,\partial_n)$
is given by a map of rings $f:A \to B$ which respects the derivations in the sense that $f \circ \partial_i =
\partial_i \circ f$ for all $i \leq n$.   By a \emph{$K$-$\Delta$-algebra} we mean a differntial
ring $(A,\Delta)$ given together with a $K$-algebra structure for which the map $K \to A$ is a
map of differential rings.
\end{Def}

A standard example of a differential field is given by taking a connected open
domain $U \subseteq \CC^n$ and setting $(K,\partial_1,\ldots,\partial_n) = ({\mathcal M}(U),\frac{\partial}{\partial z_1},
\ldots, \frac{\partial}{\partial z_n})$ where ${\mathcal M}(U)$ the field of meromorphic functions on $U$ where
$z_1,\ldots,z_n$ are the standard coordinates on $U$.  The
Seidenberg embedding theorem asserts that every finitely generated differential field of characteristic zero may
be realized as a subdifferential field of ${\mathcal M}(U)$ for some connected domain $U \subseteq \CC^n$.  In fact, a little
more is true.

\begin{theorem}[Seidenberg~\cite{Seidenberg1,Seidenberg2}]
\label{seidenbergembedding}
If $K \subseteq {\mathcal M}(U)$ is a finitely generated differential subfield of
the differential field of meromorphic functions on some connected domain $U \subseteq \CC^n$
and $L = K \langle u \rangle$ is a simple differential extension of $K$ (meaning that $L$ is a
differential field extending $K$ which is generated as a differential field by $K$ and the single element $u \in L$),
then there is a connected domain $V \subseteq U$ and an embedding of differential fields
$\iota:L \hookrightarrow  {\mathcal M}(V)$ compatible with the embedding ${\mathcal M}(U) \to {\mathcal M}(V)$.
\end{theorem}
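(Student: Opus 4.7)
The plan is to split on whether $u$ is differentially algebraic or differentially transcendental over $K$: handle the transcendental case by finding a differentially transcendental meromorphic function on a subdomain, and handle the algebraic case---which is the substantive one---by solving the minimal differential equation of $u$ over $K$ holomorphically near a well-chosen point.

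For the transcendental case, I would take any connected open $V \subseteq U$ centered at a point where the finitely many differential-field generators of $K$ are all holomorphic. Since $K$ is then countable, a Baire-category (or cardinality) argument in the space of convergent power series at that point produces some $f \in \mathcal{M}(V)$ differentially transcendental over $K$; the assignment $u \mapsto f$ then extends to a differential $K$-embedding $L \hookrightarrow \mathcal{M}(V)$.

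For the differentially algebraic case, let $I = I(u/K) \subseteq K\{y\}$ be the prime differential ideal of polynomials vanishing at $u$. In the ordinary case $n=1$, pick $P \in I$ of minimal Ritt rank; in the partial case, fix a Ritt characteristic set $\Sigma \subseteq I$. Let $S$ denote the separant of $P$ (respectively, the product of the separants of the elements of $\Sigma$). Choose $p_0 \in U$ where all coefficients appearing in $P$ (or in $\Sigma$) are holomorphic, then choose an initial jet at $p_0$ of the appropriate order for which $P=0$ admits a solution with $S \neq 0$, and invoke the classical Cauchy existence theorem for holomorphic ODEs (respectively the Riquier--Janet or Cauchy--Kovalevskaya existence theorem applied to the passive system associated to $\Sigma$) to obtain a holomorphic solution $\phi$ on some polydisc $V \ni p_0$ with $V \subseteq U$. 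Because $\mathcal{M}(V)$ is an integral domain, the annihilator $I(\phi/K)$ is automatically a prime differential ideal; it contains $P$ (respectively $\Sigma$), and minimality of the rank together with $S(\phi)(p_0) \neq 0$ forces $I(\phi/K) = I(u/K)$ by the standard low-power argument. Hence $u \mapsto \phi$ extends uniquely to the desired $K$-embedding $\iota : L \hookrightarrow \mathcal{M}(V)$, compatible with $\mathcal{M}(U) \to \mathcal{M}(V)$ because the latter is restriction and we chose $V \subseteq U$.

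The main obstacle I anticipate is the partial case: the existence theorem must be applied to the passive/involutive system built from $\Sigma$, and one must verify that the initial data can be chosen simultaneously so that all separants are nonzero and the integrability conditions of the system hold. In the ordinary case these technicalities collapse and the construction is entirely transparent, so the bulk of the effort in the general statement sits in the PDE existence step and the consistency check on the chosen initial data.
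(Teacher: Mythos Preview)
The paper does not give its own proof of this statement: it is quoted as a background result with a citation to Seidenberg's original papers, and the text immediately moves on. So there is no ``paper's proof'' to compare against.

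That said, your sketch is essentially the classical argument that Seidenberg himself gives. The split into the differentially transcendental case (handled by a counting/Baire argument) and the differentially algebraic case (handled by choosing a characteristic set, picking a regular point $p_0$ for the coefficients, selecting admissible initial data with nonvanishing separant, and invoking an analytic existence theorem) is exactly the standard route. Your identification of the genuine difficulty---that in the partial case one must pass to a passive/coherent autoreduced set and check that consistent initial data can be chosen at $p_0$ so that all separants are nonzero---is accurate; this is precisely where Seidenberg's second paper does the work. One small point you pass over quickly: the existence of suitable initial data at the specific point $p_0$ requires knowing that the specialization of the characteristic set to the values of the coefficients at $p_0$ still cuts out a variety on which the separant does not vanish identically, which is why $p_0$ must be chosen generically (avoiding a proper analytic subset of $U$) rather than arbitrarily. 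With that caveat, the outline is sound.
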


To speak of differential equations on an algebraic variety, we need to generalize the arc space construction.

Consider $(K,\partial_1,\ldots,\partial_n)$ a differential field.   For each $m \in \NN$ there are
two natural $K$-algebra structures on $R_m = K[\epsilon_1,\ldots,\epsilon_n]/(\epsilon_1,\ldots,\epsilon_n)^{m+1}$.
First, there is the standard structure $\iota:K \to R_m$ given by $a \mapsto a + \sum_{{\mathbf 0} \neq \alpha} 0 \cdot \epsilon^\alpha$.
Secondly, using the derivations, we have an exponential map $E_m:K \to R_m$ given by
$$a \mapsto \sum _{|\alpha| \leq m}  \frac{1}{\alpha_1! \cdots \alpha_n!} \partial_1^{\alpha_1} \cdots \partial_n^{\alpha_n} (x) \epsilon_1^{\alpha_1} \cdots \epsilon_n^{\alpha_n} \text{ .}$$
Note that the standard map $\iota$ is itself an exponential map with respect to the trivial derivations.

If $X$ is an algebraic variety over $K$, then the functor from $K$-algebras to $\mathsf{S}\mathsf{e}\mathsf{t}$ defined by
$$A \mapsto (X \times_{E_m^*} \operatorname{Spec}(R_m))(A \otimes_{\iota} R_m)$$
is representable by a scheme $\tau_m X$ called the \emph{$m^\text{th}$ prolongation space of $X$}.  Note that when
the derivations are trivial, then $E_m = \iota$ and $\tau_m X = \cA_{m,n} X$.

From the description of $\tau_m X$ as a functor, one sees that there are natural projection maps $\pi_{m,m'}:\tau_m X \to \tau_{m'} X$
for $m \geq m'$ corresponding to the reduction maps $R_{m} \to R_{m'}$ and that $\tau_0 X = X$ canonically.  On the other hand,
corresponding to the exponential map $E_m:K \to R_m$ one has a differentially defined map $\nabla_m:X \to \tau_m X$.  That is,
for any $K$-$\Delta$-algebra $A$ we have a map of sets $\nabla_m:X(A) \to \tau_m X(A)$ corresponding to the map
$$X(A) \to (X \times_{E_m^*} \operatorname{Spec}(R_n)) (A \otimes_\iota R_m)$$ coming from $1_A \otimes_K E_m$.

Using the prolongation spaces, one can make sense of the notions of differential regular (respectively, rational or
constructible) functions on the algebraic variety $X$.  That is, a differential regular (respectively, rational or constructible)
function $f:X \to Y$ (where $Y$ is another algebraic variety over $K$) is a map from $X$ to $Y$, considered
as functors from the category of $K$-$\Delta$-algebras to $\mathsf{S}\mathsf{e}\mathsf{t}$ which
is given by a regular (respectively, rational
or constructible) function $f_m:\tau_m X \to Y$ for some $m \in \NN$ in the sense that for any
$K$-$\Delta$-algebra $A$ and point $a \in X(A)$ one has $f(a) = f_m(\nabla_m(a))$.

A differential subvariety $Y$  of $X$ is given by a subvariety $Y_m \subseteq  \tau_m X$ for some $m \in \NN$.
For any $K$-$\Delta$-algebra $A$, the set of $A$-points on $Y$ is
$$Y(A) := \{ a \in X(A) ~:~ \nabla_m(a) \in Y_m(A) \} \text{ .}$$

Sets of the form $Y(K) \subseteq X(K)$ are called \emph{Kolchin closed} (or, sometimes, \emph{$\Delta$-closed}) and
as the name suggests are the closed sets of a noetherian topology on $X(K)$.  If $Y$ is an irreducible
differential subvariety of the algebraic variety $X$, then the set of differential rational functions on $Y$ forms a
differential field denoted by  $K \langle Y \rangle$.  If $\operatorname{tr.deg}_K(K \langle Y \rangle)$ is finite,
then we say that $Y$ is finite dimensional and define $\dim Y := \operatorname{tr.deg}_K(K \langle Y \rangle)$.

An important class of Kolchin closed sets comes from the constants.

\begin{Def}
If $(R,\Delta)$ is a differential ring, then
$$R^\Delta := \{ a \in R ~:~ \partial(a) = 0 \text{ for all } \partial \in \Delta \}$$
is the \emph{ring of constants}.  We sometimes write $C = C_R$ for $R^\Delta$.
\end{Def}

If $(K,\partial_1,\ldots,\partial_n)$ is a differential field and
 $X$ is an algebraic variety over the constants $K^\Delta$, then as we noted above $\tau_m (X_K) = \cA_{m,n} (X_K)$, canonically, and there
is a natural regular section $z_m:X_K \to \tau_m(X_K)$ of the projection map $\tau_m (X_K) \to \tau_0 (X_K) = X_K$ corresponding
to the standard map $\iota:K \to R_m$.  We define $X^\Delta$ to be the differential subvariety of $X$ given by $z(X_K) \subseteq \tau_1 (X_K)$.
At the level of points, if $A$ is $K$-$\Delta$-algebra, then $X^\Delta(A) = X(A^\Delta) \subseteq X(A)$.
Note that if $X$ is an algebraic variety over the constants, then its dimension as an algebraic variety is equal to the dimension of
$X^\Delta$ as a differential variety.

Just as algebraically closed fields play the role of universal domains for ordinary algebraic varieties, differentially closed
fields serve as universal domains for differential algebraic geometry.  Here a \emph{differentially closed field} $(\UU,\partial_1,\ldots,\partial_n)$
is an existentially closed differential field in the sense that if $X$ is an algebraic variety over $\UU$ and
$Y \subseteq X$ is a differential subvariety for which there is some differential field extension $(L,\partial_1,\ldots,\partial_n)
\supseteq (\UU,\partial_1,\ldots,\partial_n)$ with $Y(L) \neq \varnothing$, then $Y(\UU) \neq \varnothing$. The
theory of differentially closed fields (of characteristic zero in $n$ commuting derivations) is axiomatized by a
first-order theory $\operatorname{DCF}_{0,n}$ in the language of rings augmented by $n$ unary function symbols to be interpreted
as the distinguished derivations.   For us, the crucial facts about $\operatorname{DCF}_{0,n}$ are:
\begin{itemize}
\item $\operatorname{DCF}_{0,n}$ eliminates quantifiers.  Geometrically, this means that if $\UU$ is a differentially closed
field, $Y \subseteq X$ is a differential subvariety of an algebraic variety over $X$ and $f:Y \to Z$ is a differential rational
function, then $f(Y(\UU))$ is a differentially constructible subset of $Z(\UU)$. That is, it is a finite Boolean combination
of Kolchin closed sets.
\item It follows from Theorem~\ref{seidenbergembedding} that if $K \subseteq \cM(U)$ is a finitely generated
differential field of meromorphic functions on some domain $U$, then there is a differentially closed field $\UU$ containing $K$
which may be realized as a subfield of a differential field of germs of meromorphic functions at some point in $U$.
\item Finally, $\operatorname{DCF}_{0,n}$ \emph{eliminates imaginaries}.
\end{itemize}

The precise content of elimination of imaginaries is given by the following theorem of Poizat in the case of $n = 1$ and
of McGrail in general.

\begin{theorem}[Poizat~\cite{Poizat}, McGrail~\cite{McGrail}]
\label{eidcf}
The theory $\text{DCF}_{0,n}$ of differentially closed fields of characteristic zero with $n$ commuting derivations
eliminates imaginaries.  More concretely, if $\UU$ is a differentially closed field, $X \subseteq \UU^m$ is a
definable set and $E \subseteq X \times X$ is a definable equivalence relation, then there is a differential constructible
function $\eta:X \to \UU^\ell$ so that for $(a,b) \in X \times X$ one has $\eta(a) = \eta(b) \Longleftrightarrow a E b$.
Moreover, $\eta$ may be defined over the same parameters required for the definitions of $X$ and $E$.
\end{theorem}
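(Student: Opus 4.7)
The plan is to reduce elimination of imaginaries to the existence of canonical codes for Kolchin closed sets and then to produce such codes using the Ritt--Kolchin theory of characteristic sets for prime differential ideals. The first step is a standard model-theoretic reduction: elimination of imaginaries is equivalent to asserting that, uniformly in any definable family $\{X_b\}_{b \in Y}$ of subsets of $\UU^m$, there is a definable function $c$ on $Y$ such that $c(b) = c(b')$ if and only if $X_b = X_{b'}$. Given a definable equivalence relation $E$ on $X$, applying this to the family of $E$-classes yields the required $\eta(a) := c(a)$, so it suffices to produce uniformly definable codes for definable subsets of $\UU^m$.

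By the quantifier elimination for $\operatorname{DCF}_{0,n}$ stated in the excerpt, every such definable set is a finite Boolean combination of Kolchin closed sets, and by the Ritt--Raudenbush basis theorem these decompose canonically into finitely many irreducible components. So fix an irreducible Kolchin closed $V \subseteq \UU^m$ corresponding to a prime differential ideal $P \subseteq \UU\{x_1,\ldots,x_m\}$. The key geometric claim is that $P$ admits a smallest differential subfield of definition $k_V \subseteq \UU$, namely the intersection of all differential subfields $k \subseteq \UU$ such that $P$ is generated, as a radical differential ideal, by $P \cap k\{x_1,\ldots,x_m\}$. An autoreduced characteristic set of $P$ with respect to a fixed orderly ranking, normalized so that the leading coefficients are $1$, produces a canonical \emph{finite} list of differential polynomials whose coefficients generate $k_V$ and form the desired code.

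The final step is to verify that this coding is uniform as $V$ varies in a definable family. The orderly ranking, the bound on the orders of derivatives appearing, and the autoreduction procedure can all be chosen constructibly on strata of the parameter space, so the normalized coefficients depend differentially constructibly on the parameter by another application of quantifier elimination. Concatenating codes across irreducible components and Boolean combinations, and stratifying the parameter space by combinatorial type, yields the uniformly definable function $c$, and hence $\eta$, over the parameters defining $X$ and $E$.

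\textbf{Main obstacle.} The principal difficulty is extracting a \emph{finite} tuple from the \emph{a priori} infinitely generated differential field $k_V$; this is precisely what characteristic sets accomplish, at the cost of substantial Ritt--Kolchin bookkeeping involving rankings, initials, separants, and partial remainders. The passage from the ordinary case $n = 1$ (Poizat) to $n$ commuting derivations (McGrail) is not purely formal: one must verify that characteristic sets retain canonicity under partial differential rankings and that the stratifications needed for uniformity remain Kolchin constructible, while classical arguments invoking the finite Morley rank of $\operatorname{DCF}_0$ are unavailable in the partial case.
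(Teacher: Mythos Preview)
The paper does not prove this theorem: it is stated with attribution to Poizat and McGrail, and the only remark on the argument is the sentence immediately following the statement, that the proof ``passes through the corresponding elimination of imaginaries theorem for algebraically closed fields.''  So there is no in-paper proof to compare your attempt against line by line.

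Your sketch nonetheless takes a different route from the one the paper gestures at.  The ACF-based argument (implicit in the paper's remark and in the cited sources) uses quantifier elimination together with the maps $\nabla_m$ to realize each definable set as the pullback of a constructible set in some prolongation space $\tau_m X$, invokes elimination of imaginaries for algebraically closed fields to code that constructible set, and uses Noetherianity of the Kolchin topology to terminate.  You instead try to manufacture canonical codes directly inside differential algebra via normalized characteristic sets of prime differential ideals, in the spirit of Kolchin's fields of definition.  This is a legitimate strategy, and it has the appeal of being intrinsic to differential algebra rather than reducing to ACF; the ACF route, on the other hand, handles arbitrary constructible sets in one stroke and requires no Ritt--Kolchin bookkeeping.

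Two points in your outline are soft and would need real work.  First, ``autoreduced with leading coefficients $1$'' does not by itself single out a \emph{unique} characteristic set of a prime differential ideal; further normalization (reducing lower-order terms against the other members of the set, etc.) is required before the coefficient tuple is canonical.  Second, the passage from codes for irreducible Kolchin closed sets to codes for arbitrary Boolean combinations is not as innocent as your final paragraph suggests: a definable set has no canonical presentation as a Boolean combination, so ``stratifying by combinatorial type'' risks coding the presentation rather than the set.  One must instead produce a canonical decomposition (e.g.\ into locally closed pieces ordered by Kolchin closure) recoverable from the set alone.  Both issues are surmountable, but they are exactly where the substance of a characteristic-set proof would lie.
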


\emph{Prima facie}, Theorem~\ref{eidcf} applies only to differentially closed fields, but the map $\eta$ may be taken to be defined
over the parameters required to define $X$ and $E$ and then from the constructibility of $\eta$ it is easy to see that for any differential
field $K$ over which everything is defined, if $a, b \in X(K)$, then $a E b \Longleftrightarrow \eta(a) = \eta(b)$.  The proof
of Theorem~\ref{eidcf} passes through the corresponding elimination of imaginaries theorem for algebraically closed fields (which is
also proven in detail in~\cite{Poizat}) and for our applications, we shall need to unwind this connection between elimination of
imaginaries for differentially closed fields and for ordinary algebraically closed fields.

\subsection{O-minimality}

Finally, we shall make use of the theory of o-minimality for which the book~\cite{vandenDries} is a good introduction.

\begin{Def}
An o-minimal structure on the real numbers $\RR_{\cF}$ is given by the choice of a distinguished set $\cF$ of
functions $f:\RR^n \to \RR$, where $n$ may depend on $f$, having the property that in the first-order structure
$$\RR_\cF := (\RR,+,\cdots,0,1,<,\{ f \}_{f \in \cF})$$ every definable subset of $\RR$ is a finite union of points and
intervals.
\end{Def}

\begin{Rk}
With the usual definition of an o-minimal structure, any expansion of the language of ordered sets is allowed.  However, it
follows from the existence of definable choice functions that every o-minimal expansion of an ordered field is bi-definable with
one obtained by expanding the language of ordered rings by function symbols.
\end{Rk}

That interesting o-minimal structures exist at all is highly non-trivial.  For us, the most important o-minimal
structure is $\RR_{\text{an},\exp}$ in which the set of distinguished functions consists of the real exponential function
$\exp:\RR \to \RR$ and local analytic functions.  That is, for each real analytic function $f$ defined on some open neighborhood of the
unit box $[-1,1]^n$ we are given the function $\overline{f}:\RR^n \to \RR$ defined by $\overline{f}(x_1,\ldots,x_n) = f(x_1,\ldots,x_n)$
if $(x_1,\ldots,x_n) \in [-1,1]^n$ and $\overline{f}(x_1,\ldots,x_n) = 0$ otherwise.   O-minimality of $\RR_{\text{an},\exp}$ is established
in~\cite{vdDM}.

O-minimality implies many strong regularity properties of the definable sets in any number of variables.
One result which we shall use is the existence of definable choice functions.  If $f:X \to Y$ is a definable (in the o-minimal
structure $\RR_\cF$) surjective function, then there is a definable right inverse $g:Y \to X$ (see Proposition 1.2 in~\cite{vandenDries}).

Since o-minimality is fundamentally a theory of ordered structures, it does not directly apply to complex analysis.  However, by realizing
$\CC$ as $\RR^2$ via the real and imaginary parts, one may interpret complex analysis within an o-minimal structure and in the series
of papers~\cite{PS1, PS2, PS3, PS-cag, PS4, PS5} Peterzil and Starchenko do just this.  For us, the most important result from their work is the following strengthening
of Chow's Theorem.

\begin{theorem}[Peterzil-Starchenko, Corollary 4.5 of~\cite{PS-cag}]
\label{PSGAGA}
Let $\RR_{\cF}$ be some o-minimal structure on the real numbers.
Let $X$ be a quasiprojective algebraic variety over $\CC$.  If $Y \subseteq X(\CC)$ is
an $\RR_{\cF}$-definable, closed complex analytic set in $X$, then $Y$ is
algebraic.
\end{theorem}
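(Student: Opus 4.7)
The plan is to follow the standard strategy for Chow-type theorems, substituting o-minimal uniform finiteness for the compactness and coherence inputs one would use in the classical proof. First I would reduce to the case of projective ambient space: embed $X$ in some $\mathbb{P}^N$ and consider the topological closure $\overline{Y}$ of $Y$ inside $\mathbb{P}^N(\CC)$. Closures of $\RR_\cF$-definable sets are definable, and an o-minimal analogue of the Remmert--Stein extension theorem (or a direct analysis via cell decomposition of the boundary $\overline{Y}\setminus Y$, which has lower real dimension than $Y$) should show that $\overline{Y}$ remains complex analytic; then $Y = \overline{Y} \cap X(\CC)$, so it is enough to prove algebraicity for $X = \mathbb{P}^N$.

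Assuming now that $Y \subseteq \mathbb{P}^N(\CC)$ is a closed, definable, complex analytic subset, I would define a complex analytic dimension $d$ for $Y$ using the Peterzil--Starchenko framework and prove a degree bound: for a generic $(N-d)$-plane $L \subseteq \mathbb{P}^N$, the intersection $Y \cap L$ is a finite set of cardinality at most some integer $\delta$ independent of $L$. The uniform bound $\delta$ comes from o-minimal uniform finiteness applied to the definable family $\{Y \cap L\}_L$ indexed by the Grassmannian, while finiteness of each individual $Y \cap L$ follows because a compact zero-dimensional complex analytic set is discrete and hence finite. This assigns $Y$ a well-defined degree $\delta$.

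Once a degree bound is in hand, I would realize $Y$ as a point of the Chow variety $\mathrm{Ch}_{d,\delta}(\mathbb{P}^N)$ parameterizing effective $d$-cycles of degree at most $\delta$. The map sending a generic $L$ to the effective zero-cycle $Y \cap L$ is $\RR_\cF$-definable, and it agrees with the restriction to $Y$ of an algebraic incidence correspondence coming from a cycle in $\mathrm{Ch}_{d,\delta}(\mathbb{P}^N)$; comparing $Y$ with that algebraic cycle, which is itself closed analytic and of the same dimension as $Y$ and contains the generic slices of $Y$, yields algebraicity of $Y$. The main obstacle is the degree bound step: converting abstract uniform finiteness supplied by o-minimality into an honest bounded intersection number requires the careful interplay between the real o-minimal structure on $\CC^n = \RR^{2n}$ and the complex analytic structure of $Y$, and leans on the earlier papers in the Peterzil--Starchenko series (dimension theory, tangent cones, and o-minimal proper mapping) which I would invoke as black boxes.
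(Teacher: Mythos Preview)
The paper does not prove this theorem; it is quoted from Peterzil--Starchenko and is accompanied only by an informal paragraph indicating the ``basic idea''. That paragraph points in a somewhat different direction from your outline: rather than a degree-bound-plus-Chow-variety argument, it emphasizes o-minimal analogues of the classical extension theorems of Bishop, Remmert--Stein, and Shiffman. The point made there is that sets definable in an o-minimal structure carry a well-behaved dimension theory, so that a definable set which is generically analytic can be shown to be analytic across its lower-dimensional exceptional locus; once the closure in projective space is genuinely analytic, the classical Chow theorem applies directly. Your first step (taking the closure in $\mathbb{P}^N$ and invoking an o-minimal Remmert--Stein) is exactly this, so the two sketches agree up to that point and then diverge.

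Your second half, bounding the degree via o-minimal uniform finiteness and then locating $Y$ on a Chow variety, is a legitimate and well-known alternative route to Chow-type theorems. It is correct in outline, but the final step is where the real content hides: knowing that $|Y\cap L|\le\delta$ for generic $L$ tells you that $Y$ \emph{ought} to correspond to a point of $\mathrm{Ch}_{d,\delta}(\mathbb{P}^N)$, but to actually produce that point you must show that the assignment $L\mapsto Y\cap L$ (as an effective zero-cycle) extends to an algebraic, not merely definable, map on a Zariski-open set of the Grassmannian, and then identify $Y$ with the support of the resulting cycle. That identification typically requires comparing $Y$ with an algebraic set of the same dimension and showing no residual analytic component survives, which again leans on the o-minimal dimension and removable-singularity machinery you flag as black boxes. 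So your plan is sound, but the step you label ``comparing $Y$ with that algebraic cycle \ldots\ yields algebraicity'' is doing more work than the sentence suggests; it is essentially the same extension/dimension input that the paper's sketch foregrounds.
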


\begin{Rk}
The statement of Corollary 4.5 in~\cite{PS-cag} takes $X(\CC) = \CC^n$, but the proof uses only the fact that
$X$ embeds into a projective space.
\end{Rk}

\begin{Rk}
Note that in Theorem~\ref{PSGAGA} there is no hypothesis that  $X$ be projective nor that $f$ satisfy any kind of
growth condition towards the boundary of $X$ in some compactification.  On the other hand, one cannot completely avoid
such considerations in that to establish that the relevant analytic sets are definable it may be necessary to study their boundary behavior.
\end{Rk}

The proof of Theorem~\ref{PSGAGA} requires much of the Peterzil-Starchenko theory of o-minimal complex analysis, but the
basic idea is clear.  Many standard theorems in the theory of complex analysis assert that if some closed subset of a complex
manifold is generically analytic in some precise sense (for example, with respect to some reasonable dimension) and the ambient
space is sufficiently nice, then that set must be analytic
(see the theorems of Bishop~\cite{Bishop}, Remmert and Stein~\cite{RemmertStein}, and Shiffman~\cite{Shiffman}).
Sets definable in o-minimal structure enjoy a very smooth dimension
theory as do all of the sets naturally associated to them through standard geometric and elementary analytic constructions because these
are also definable.

\section{Main theorem}
\label{mt}

With our preliminaries in place, we flesh out the sketch of our main theorem from the introduction.  Throughout this section,
we fix a natural number $n$ and when we speak of a differential field we mean one with $n$ distinguished derivations.
Likewise,
when speaking of arc and jet spaces and differential fields we suppress the index $n$.  That is, we write $J_m$ for $J_{m,n}$ and $\cA_m$ for $\cA_{m,n}$.

In what follows we shall make the following hypotheses.

\begin{itemize}
\item $\RR_{\cF}$ is a fixed o-minimal structure on the reals.
\item $G$ is an algebraic group over $\CC$.
\item $Y$ is a complex algebraic variety and $G \times Y \to Y$ is a regular function
expressing a faithful action $G \curvearrowright Y$ of $G$ on $Y$.
\item $U \subseteq Y(\CC)$ is a complex submanifold of the $\CC$-points of $Y$.
\item $\Gamma < G(\CC)$ is a Zariski dense, discrete subgroup of $G(\CC)$.
\item Via the restriction of the action of $G(\CC)$ on $Y(\CC)$ to $U$, $\Gamma$ acts as a group of automorphisms of $U$.
\item $\pi:U \to X(\CC)$ is a complex analytic covering map of the algebraic variety $X$ expressing $X(\CC)$
    as the quotient $\Gamma \backslash U$.
\item $F \subseteq Y(\CC)$ is an open $\RR_{\cF}$-definable subset of $Y(\CC)$ for which the restriction
$\pi \upharpoonright F:F \to X(\CC)$ is $\RR_{\cF}$-definable and surjective onto $X(\CC)$.
\end{itemize}

\begin{Rk}
Let us note that for each $\gamma \in \Gamma$, the set $\gamma \cdot F$ is definable since the action of $G(\CC)$ on
$Y(\CC)$ is algebraic and hence \emph{a fortiori} definable.  Thus, we may cover $U$ by a set of definable sets indexed by
$\Gamma$.
\end{Rk}

\begin{Rk}
It follows from the existence of definable choice functions in o-minimal expansions of ordered fields that there is a
definable $F' \subseteq F$ so
that the restriction of $\pi$ to $F'$ is definable and a bijection between $F'$ and $X(\CC)$.  It is convenient for
us to take $F$ to be open in which case we cannot assume that the restriction of $\pi$ to $F$ is one-to-one.
\end{Rk}

From these data we construct a differential analytic map $\chi$ on $X$ which nearly inverts $\pi$.  We shall call this
resulting map the \emph{generalized logarithmic derivative associated to $\pi$}.

We begin with two lemmata on jet spaces, the first showing that jets of covering maps are themselves covering maps and the
second showing that jets of definable functions are also definable.

\begin{lem}
\label{jetcover}
For each natural number $m$, there is a natural action of $\Gamma$ on $J_{m}(U)$ and with respect to this
action, $J_{m}(\pi)$ expresses $\cA_{m}(X)(\CC)$ as $\Gamma \backslash J_{m}(U)$.
\end{lem}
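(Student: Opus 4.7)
The plan is to use functoriality of the jet space construction together with the local biholomorphism property of the covering map $\pi$, reducing everything to the elementary fact that a local biholomorphism induces an isomorphism on jets.

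First, I would define the $\Gamma$-action. Each $\gamma \in \Gamma$ acts on $U$ by the restriction of the $G(\CC)$-action, which is a biholomorphism of $U$. Functoriality of $J_{m}$ then yields an analytic automorphism $J_{m}(\gamma):J_{m}(U) \to J_{m}(U)$ sending a jet $[f]_{m}$ to $[\gamma \circ f]_{m}$, and the identity $J_{m}(\gamma_1 \gamma_2) = J_{m}(\gamma_1) \circ J_{m}(\gamma_2)$ upgrades this to a left action. Because $\pi$ factors through $\Gamma \backslash U$, one has $\pi \circ \gamma = \pi$ for every $\gamma$, hence $J_{m}(\pi) \circ J_{m}(\gamma) = J_{m}(\pi)$; that is, $J_{m}(\pi):J_{m}(U) \to J_{m}(X(\CC)) = \cA_{m}(X)(\CC)$ is $\Gamma$-invariant, where the identification of the target uses smoothness of $X$ as stated in the preceding proposition/definition.

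It remains to show that this $\Gamma$-invariant map is surjective and separates $\Gamma$-orbits. For surjectivity, given $[h]_{m}$ with $h(0) = x \in X(\CC)$, pick any preimage $u \in \pi^{-1}(x)$; since $\pi$ is a covering map, there is an open neighbourhood $V \ni u$ mapped biholomorphically by $\pi$ onto an open neighbourhood $W$ of $x$. Shrinking the domain of a representative of $[h]_{m}$ so that its image lies in $W$, the composition $\tilde{h} := (\pi\upharpoonright V)^{-1} \circ h$ is an analytic lift with $J_{m}(\pi)([\tilde{h}]_{m}) = [h]_{m}$. For separation of orbits, suppose $J_{m}(\pi)([f]_{m}) = J_{m}(\pi)([g]_{m})$ with $f(0) = u_1$ and $g(0) = u_2$; then $\pi(u_1) = \pi(u_2)$, so some $\gamma \in \Gamma$ satisfies $\gamma \cdot u_1 = u_2$. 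The translated jet $J_{m}(\gamma)([f]_{m}) = [\gamma \circ f]_{m}$ has the same $J_{m}(\pi)$-image by $\Gamma$-invariance, and its basepoint is $u_2$. Since $\pi$ is a local biholomorphism near $u_2$, the functoriality of $J_{m}$ makes $J_{m}(\pi)$ restrict to a bijection between jets based at $u_2$ and jets based at $\pi(u_2)$, forcing $[\gamma \circ f]_{m} = [g]_{m}$ and exhibiting the $\Gamma$-equivalence of $[f]_{m}$ and $[g]_{m}$.

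I do not anticipate any serious obstacle, as the argument is essentially fibrewise and reduces to the two standard facts just invoked. The one point requiring a little vigilance is the appeal to smoothness of $X$ to invoke $\cA_{m}(X)(\CC) = J_{m}(X(\CC))$; away from the smooth locus one would have to argue directly at the level of the scheme $\cA_{m}(X)$ rather than the analytic jet space $J_{m}(X(\CC))$, but this is precisely the restriction under which the arc-bundle construction is being used in this paper.
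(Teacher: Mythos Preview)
Your proof is correct and follows essentially the same route as the paper's: define the action via functoriality, verify $\Gamma$-invariance of $J_{m}(\pi)$ from $\pi \circ \gamma = \pi$, and then separate $\Gamma$-orbits by first matching basepoints via some $\gamma \in \Gamma$ and then using that $J_{m}(\pi)$ restricts to a bijection on jets over a neighbourhood where $\pi$ is biholomorphic. Your version is in fact slightly more complete, as you spell out the surjectivity of $J_{m}(\pi)$ explicitly while the paper leaves it implicit.
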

\begin{proof}
Define the action of $\Gamma$ on $J_{m}(U)$ by $\gamma \cdot x := J_{m}(\gamma \cdot) (x)$.
By the functoriality of the space of jets construction, we see that for each $\gamma \in \Gamma$ we have
$J_{m}(\pi) \circ \gamma \cdot = J_{m}(\pi) \circ J_{m}(\gamma \cdot) = J_{m} (\pi \circ \gamma \cdot) = J_{m}(\pi)$.
That is, $J_{m}(\pi)$ is invariant under precomposition with the action of $\Gamma$.
Let us check now that if $x$ and $y$ are two points of $J_{m}(U)$ having the same image under $J_{m}(\pi)$, then
there is some $\gamma \in \Gamma$ with $\gamma \cdot x = y$.  Taking the images in $U$,
we see that their images $\overline{x}$ and $\overline{y}$ in $U$ have the same image under $\pi$.  Hence, there is
some $\gamma \in \Gamma$ with $\gamma \cdot \overline{x} = \overline{y}$.  As $\pi$ is
a covering map, we can find a neighborhood $V \ni \overline{y}$
on which $\pi \upharpoonright V: V \to \pi(V) \subseteq X(\CC)$ is biholomorphic.  By functoriality of $J_{m}$,
$J_{m}(\pi) \upharpoonright J_{m} (V): J_{m}(V) \to J_{m} (\pi(V)) \subseteq \cA_{m} X(\CC)$ is also
biholomorphic.  In particular, because $J_{m}(\pi) \upharpoonright J_{m}(V) (y) = J_{m}(\pi)(y) =
J_{m}(\pi)(x) = J_{m}(\pi)(\gamma \cdot x) = J_{m}(\pi) \upharpoonright J_{m}(V) (\gamma \cdot x)$, we
must have $y = \gamma \cdot x$.
\end{proof}

With our second result on jets we note that the jet of a definable function is itself definable.

\begin{lem}
\label{jetdef}
Fix an o-minimal expansion $\RR_\cF$ of the real field.
Let $Z$ be a complex algebraic variety, $V \subseteq Z(\CC)$ an open subset of $Z(\CC)$,
$W$ a complex algebraic variety, $m$  a natural number and $f:V \to W(\CC)$ a definable
analytic function.  Then $J_{m}(f):J_{m}(V) \to \cA_{m} W (\CC)$ is a definable analytic function.
\end{lem}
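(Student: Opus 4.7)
The plan is to work in local coordinates, express $J_m(f)$ explicitly in terms of partial derivatives of $f$ and the jet coordinates of the source via the chain rule, and then invoke the standard o-minimal fact that derivatives of definable analytic functions are definable.

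First I would reduce to a local calculation. Since definability and analyticity are both local properties, and $J_m$ is functorial and compatible with open immersions, I would cover $V$ by definable open subsets $V_i$ each contained in an affine coordinate chart of $Z$, and shrink further so that each $f(V_i)$ is contained in an affine coordinate chart of $W$. It then suffices to treat the case in which $V \subseteq \CC^n$ is a definable open set, $W \subseteq \CC^d$ is open, and $f = (f_1, \ldots, f_d) : V \to W$ is a definable analytic function, with $J_m$ and $\cA_m$ computed in these coordinates.

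Next I would use the standard identification of $J_m(V)$ with $V \times \CC^{n M}$, where $M = \#\{\alpha \in \NN^n : 0 < |\alpha| \leq m\}$, by sending a jet $[g]_m$ with $g(0) = v$ to the basepoint $v$ together with the collection of partial derivatives $\partial^\alpha g / \partial z^\alpha(0)$ for $0 < |\alpha| \leq m$, and similarly for $\cA_m W(\CC)$. Under these identifications the chain rule (Fa\`a di Bruno formula) shows that the coordinates of $J_m(f)([g]_m)$ are given by a \emph{universal} polynomial in the jet coordinates of $[g]_m$ and in the partial derivatives $\partial^\beta f_k / \partial z^\beta(v)$ for $|\beta| \leq m$ and $1 \leq k \leq d$, evaluated at the basepoint $v$. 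It therefore suffices to show that each partial derivative $\partial^\beta f_k / \partial z^\beta$ is $\RR_{\cF}$-definable as a function on $V$. Writing $f$ via its real and imaginary parts as a definable $C^\infty$ function with values in $\RR^{2d}$, this is standard: the graph of a partial derivative of a definable differentiable function is cut out by a first-order formula expressing the existence of the usual limit of difference quotients, hence is definable (see~\cite{vandenDries}). Iterating yields definability of all partial derivatives up to order $m$, and substituting these into the chain-rule polynomials shows that $J_m(f)$ is definable.

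I do not expect a serious obstacle. The only mildly subtle point is verifying that the identification of $J_m(V)$ as a bundle of tuples of partial derivatives is compatible across overlapping coordinate charts in a definable manner, so that the local definability statements glue. This reduces to the same derivative-of-definable principle applied to the algebraic, and hence $\RR_{\cF}$-definable, transition maps on $Z$ and $W$; since the transformation law for jets under a change of coordinates is itself a universal polynomial in derivatives of the transition map, definability is preserved and the proof goes through.
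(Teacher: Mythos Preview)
Your proposal is correct and follows essentially the same approach as the paper: both arguments reduce to local coordinates, invoke the Fa\`a di Bruno formula to express $J_m(f)$ as a polynomial in the jet coordinates and the partial derivatives of $f$ up to order $m$, and appeal to the fact that derivatives of a definable (analytic) function are definable via the first-order expressibility of the limit of difference quotients. Your write-up is somewhat more explicit about the reduction to affine charts and the compatibility of the jet identifications under transition maps, but this is a matter of detail rather than a different strategy.
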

\begin{proof}
For the sake of legibility, we suppress the subscripts from $J$ and $\cA$.  Let us write $\nu:\cA Z \to Z$ for
the natural projection map.

Analyticity of $J(f)$ is a general feature of the jet of an analytic function (see~\cite{Bourbaki}).  Definability
of $J(f)$ follows from the facts that for any definable complex analytic function its complex derivatives are definable and
the jet space $J(V) = \nu^{-1} V$ is definable as a subset of $\cA Z (\CC)$.   Indeed, the usual limit definition of a
complex derivative may be naturally expressed using the formula defining $f$ and the field operations.  As
$f$ is analytic, it follows that all of its derivatives of all orders are definable.  Read in coordinates,
the map $J(f)$ is given by the usual Fa\`{a} di Bruni formula which is polynomial in the derivatives of $f$ up to
order $m$ and the coordinates on the jet space.

\end{proof}

\begin{Rk}
One could generalize Lemma~\ref{jetdef} to the case that $V \subseteq Z(\CC)$ is merely a complex submanifold by noting that
the jet space of $V$ at a point as a submanifold of $\cA Z(\CC)$ may be identified by iterating the standard definitions of
tangent spaces from calculus.
\end{Rk}

The relation on $Y$ defined by $x \sim y$ if and only if $(\exists g \in G(\CC)) g \cdot x = y$ is an equivalence relation
which on $\CC$-points identifies everything, but when interpreted in a differential field $M$ with field of constants $\CC$
is a nontrivial definable equivalence relation.  In particular, when $\UU$ is a differentially closed field with field of
constants $\CC$, Theorem~\ref{eidcf} says that there is a differential constructible function $\widetilde{\chi}$
defined on $Y$
having the property that for $a, b \in Y(\UU)$ one has $\widetilde{\chi}(a) = \widetilde{\chi}(b)$
 if and only if $(\exists g \in G(\CC)) g \cdot a = b$.
Since $\widetilde{\chi}$ is differential constructible, if $\CC \subseteq M \subseteq \UU$ is some intermediate differential field, it is
still the case that for $a, b \in Y(M)$ the equality $\widetilde{\chi}(a) = \widetilde{\chi}(b)$
 holds if and only if $(\exists g \in G(\CC)) g \cdot a = b$.

\begin{Rk}
In the case that $Y = {\mathbb P}^1$ and $G = \operatorname{PGL}_2$ acting by fractional linear transformations, then the function
$\widetilde{\chi}$ may be identified with the classical Schwarzian derivative.  Similar maps, sometimes with explicit formulae,
appear in the literature for other quotients of algebraic varieties by the constant points of an algebraic group (see, for example,
the generalized Schwarzians for $Y = {\mathbb P}^n$ and $G = \operatorname{PGL}_{n+1}$ in~\cite{Buium} or~\cite{SasakiYoshida}).
In analogy with the classical Schwarzian, we shall call the map $\widetilde{\chi}$ a \emph{generalized Schwarzian derivative}.
\end{Rk}

Our generalized Schwarzian $\widetilde{\chi}$  admits a more algebraic description.  Being a differential
constructible function, there is some $m \in \ZZ_+$ and a constructible function $\widetilde{\chi}_m$ defined on $\tau_m Y$ for which
$\widetilde{\chi} = \widetilde{\chi}_m \circ \nabla_m$.   The algebraic group $G$ acts algebraically on $\tau_m Y = \cA_{m} Y$
for precisely the same reason that
$\Gamma$ acts on $J_m U$.  While quotients in the category of algebraic varieties with regular maps as morphisms
do not always exist, they do always exist constructibly.
That is, there is some constructible function $\xi_m$ on $\cA_{m} Y$ expressing the quotient $G \backslash \cA_{m} Y$.  We aim to show that
for $m \gg 0$ one may take $\widetilde{\chi}_m = \xi_m$ so that $\widetilde{\chi} = \xi_m \circ \nabla_m$.

To this end we first prove two basic lemmata on differential algebraic geometry, one result about
definable equivalence relations and a second about the connection between the Kolchin topology on
a variety and the Zariski topology on its prolongations.

\begin{lem}
\label{eqclose} Let $\UU$ be a differentially closed field.
If $X$ is a differential variety over $\UU$ and $\sim$ is a definable equivalence relation on
$X$, then for $a$ and $b$ elements of $X(\UU)$, one has $a \sim b$ if and only in
$\overline{[a]_\sim}^{\text{Kolchin}} = \overline{[b]_\sim}^{\text{Kolchin}}$  where
we write $\overline{A}^\text{Kolchin}$ for the Kolchin closure of the set $A$.
\end{lem}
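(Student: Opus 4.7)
The forward direction is immediate: $a \sim b$ forces $[a]_\sim = [b]_\sim$ and hence the Kolchin closures coincide. For the converse, write $C := \overline{[a]_\sim}^{\text{Kolchin}} = \overline{[b]_\sim}^{\text{Kolchin}}$; the plan is to produce a common witness $c \in [a]_\sim \cap [b]_\sim$, from which $a \sim c \sim b$ immediately gives $a \sim b$.

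Since $\sim$ is definable it is Kolchin-constructible in $X \times X$, and hence so are the fibres $[a]_\sim$ and $[b]_\sim$ in $X$. The Ritt--Raudenbush basis theorem guarantees that the Kolchin topology on $X$ is noetherian, so $C$ decomposes into finitely many Kolchin-irreducible components $C_1, \ldots, C_k$. First I would verify that $[a]_\sim \cap C_i$ is Kolchin-dense in $C_i$ for each $i$: otherwise $[a]_\sim$ would sit inside the proper closed subset $\overline{[a]_\sim \cap C_i}^{\text{Kolchin}} \cup \bigcup_{j \neq i} C_j$ of $C$ (properness uses irreducibility of $C_i$ together with the fact that no component is contained in another), contradicting $\overline{[a]_\sim}^{\text{Kolchin}} = C$. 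The same reasoning applies to $[b]_\sim$.

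Next, being Kolchin-constructible, $[a]_\sim$ may be written as a finite union $\bigcup_j (U_j \cap F_j)$ of locally closed sets in $X$. The density of $[a]_\sim \cap C_i$ in the irreducible $C_i$ forces some $(U_j \cap F_j) \cap C_i$ to be dense in $C_i$, which in turn forces $F_j \supseteq C_i$, so that $V_a := U_j \cap C_i$ is a non-empty Kolchin-open subset of $C_i$ contained in $[a]_\sim$. A parallel $V_b \subseteq [b]_\sim$ is obtained. By irreducibility of $C_i$ the intersection $V_a \cap V_b$ is again a non-empty Kolchin-open subset of $C_i$, hence defines a consistent formula over $\UU$; since $\UU$ is differentially closed this formula is realised by some $c \in X(\UU)$, and any such $c$ lies in $[a]_\sim \cap [b]_\sim$.

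The main step to verify carefully is the density claim in the second paragraph; everything else is routine manipulation of constructible sets in an irreducible noetherian space combined with the existential closedness of $\UU$.
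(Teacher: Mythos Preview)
Your argument is correct and follows the same strategy as the paper: use quantifier elimination to see that the equivalence classes are Kolchin-constructible, extract dense open pieces inside the common closure, and intersect them to find a common witness. The paper's version is more streamlined in that it directly invokes the standard fact that a constructible set contains a Kolchin-dense open subset of its closure, bypassing your explicit decomposition into irreducible components; note also that your final appeal to existential closedness is redundant, since once $V_a \cap V_b$ is shown to be a non-empty open subset of $C_i(\UU)$ you already have a $\UU$-point in hand.
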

\begin{proof}
Certainly, if $a \sim b$, then $[a]_\sim = [b]_\sim$ so that $\overline{[a]_\sim}^{\text{Kolchin}} = \overline{[b]_\sim}^{\text{Kolchin}}$.
Conversely, by quantifier elimination in differentially closed fields we know that the sets $[a]_\sim$ and $[b]_\sim$
are differentially constructible.  Hence, there is a (Kolchin) dense open in $\overline{[a]_\sim}^{\text{Kolchin}}$ set
$V \subseteq [a]_\sim$ and a dense open in $\overline{[b]_\sim}^\text{Kolchin}$ set $W \subseteq [b]_\sim$.  If
$\overline{[a]_\sim}^\text{Kolchin} = \overline{[b]_\sim}^\text{Kolchin}$, then $V \cap W$ is a dense open subset of
$\overline{[a]_\sim}^\text{Kolchin}$.  In particular, it is nonempty so that $[a]_\sim \cap [b]_\sim \supseteq V \cap W$ is
non-empty implying that $[a]_\sim = [b]_\sim$.
\end{proof}

The Kolchin topology and the Zariski topology are related through the prolongation spaces.  With the next lemma
we show how to compute a Kolchin closure via Zariski closures in prolongation spaces.

\begin{lem}
Let $\UU$ be a differentially closed field and $X$ be an algebraic variety over $\UU$ and let $A \subseteq X(\UU)$ be a set of $\UU$-points on $X$.   Then
$$\overline{A}^\text{Kolchin} = \bigcap_{m \geq 0} \nabla_m^{-1} (\overline{\nabla_m(A)}^\text{Zariski}) \text{ .}$$
Moreover, this set may be realized as $\nabla_M^{-1} (\overline{\nabla_M(A)}^\text{Zariski})$ for $M$ sufficiently large.
\end{lem}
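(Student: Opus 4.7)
The plan is to prove the two inclusions separately, then upgrade the infinite intersection to a finite one using Noetherianity of the Kolchin topology.

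For the inclusion $\overline{A}^{\text{Kolchin}} \subseteq \bigcap_{m \geq 0} \nabla_m^{-1}(\overline{\nabla_m(A)}^{\text{Zariski}})$, I would observe that for each $m$ the set $\nabla_m^{-1}(\overline{\nabla_m(A)}^{\text{Zariski}})$ is Kolchin closed by the definition of differential subvariety recalled earlier in the excerpt, and it contains $A$ because $\nabla_m(A) \subseteq \overline{\nabla_m(A)}^{\text{Zariski}}$. Taking the intersection of Kolchin closed sets containing $A$ still contains $\overline{A}^{\text{Kolchin}}$.

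For the reverse inclusion, I would take an arbitrary Kolchin closed $Z \supseteq A$ and show the intersection on the right is contained in $Z$. By definition, $Z = \nabla_m^{-1}(Z_m)$ for some $m$ and some Zariski closed $Z_m \subseteq \tau_m X$. From $A \subseteq Z$ we get $\nabla_m(A) \subseteq Z_m$, hence $\overline{\nabla_m(A)}^{\text{Zariski}} \subseteq Z_m$ (since $Z_m$ is Zariski closed), hence $\nabla_m^{-1}(\overline{\nabla_m(A)}^{\text{Zariski}}) \subseteq Z$. Intersecting over all Kolchin closed $Z \supseteq A$ gives what we want.

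For the ``moreover'' clause, the key is to show the family $\{\nabla_m^{-1}(\overline{\nabla_m(A)}^{\text{Zariski}})\}_{m \geq 0}$ is a \emph{decreasing} sequence of Kolchin closed sets in $X$; then Noetherianity of the Kolchin topology (already noted in the section on differential algebra) forces stabilization at some finite stage $M$. To see monotonicity, recall the natural truncation maps $\pi_{m,m'} : \tau_m X \to \tau_{m'} X$ for $m \geq m'$ satisfy $\pi_{m,m'} \circ \nabla_m = \nabla_{m'}$. Since $\pi_{m,m'}$ is a morphism of algebraic varieties, it is Zariski continuous, so $\pi_{m,m'}(\overline{\nabla_m(A)}^{\text{Zariski}}) \subseteq \overline{\pi_{m,m'}(\nabla_m(A))}^{\text{Zariski}} = \overline{\nabla_{m'}(A)}^{\text{Zariski}}$. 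Applying $\nabla_m(a)$ in place of a general point, any $a$ with $\nabla_m(a) \in \overline{\nabla_m(A)}^{\text{Zariski}}$ satisfies $\nabla_{m'}(a) \in \overline{\nabla_{m'}(A)}^{\text{Zariski}}$, giving the desired containment for $m \geq m'$.

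The argument is essentially formal; the only substantive ingredient beyond definitions is Noetherianity of the Kolchin topology, which I expect to be the one place where something nontrivial is used, and which has already been invoked in the preliminaries.
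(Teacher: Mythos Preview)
Your proposal is correct and follows essentially the same approach as the paper: both prove the two inclusions by the same formal arguments, establish that the sequence of sets is decreasing via the compatibility $\pi_{m,m'}\circ\nabla_m=\nabla_{m'}$, and then invoke noetherianity of the Kolchin topology for the moreover clause. The only cosmetic difference is that the paper phrases monotonicity via preimages ($\overline{\nabla_{m+1}(A)}^{\text{Zariski}}\subseteq\nu_{m+1,m}^{-1}\overline{\nabla_m(A)}^{\text{Zariski}}$) while you phrase it via images, and the paper inserts the monotonicity observation before the reverse inclusion rather than after.
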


\begin{proof}
Each of the sets $\nabla_m^{-1} (\overline{\nabla_m(A)}^\text{Zariski})$ is a Kolchin closed set containing $A$.  Hence,
$\overline{A}^\text{Kolchin}$ is contained in $\bigcap_{m \geq 0} \nabla_m^{-1} (\overline{\nabla_m(A)}^\text{Zariski})$.

Before embarking on the proof of the other inclusion let us note that the intersection defining the righthand side of
our purported equation is actually a descending intersection.  That is, for each $m$ we have
$\nabla_{m+1}^{-1} (\overline{\nabla_{m+1}(A)}^\text{Zariski}) \subseteq
\nabla_m^{-1} (\overline{\nabla_m(A)}^\text{Zariski})$.  Indeed, the prolongation spaces form a projective
system and the $\nabla$ maps respect this system in the sense that if $\nu_{m+1,m}:\tau_{m+1}X \to \tau_m X$ is
the projection map from level $m+1$ to level $m$, then $\nabla_m = \pi_{m+1,m} \circ \nabla_{m+1}$.
Thus, if $Y \subseteq \tau_m X$ is a Zariski closed set containing $\nabla_m(A)$, then $\nu_{m+1,m}^{-1}(Y)$ is a
Zariski closed subset of $\tau_{m+1} X$ containing $\nabla_{m+1}(A)$ so that
$$\nabla_{m+1}^{-1} (\overline{\nabla_{m+1}(A)}^\text{Zariski}) \subseteq
\nabla_{m+1}^{-1} (\nu_{m+1,m}^{-1} \overline{\nabla_m(A)}^{\text{Zariski}}) = \nabla_m^{-1} \overline{\nabla_m(A)}^\text{Zariski} \text{ .}$$

For the inclusion of the righthand side in the left, consider a Kolchin closed set $Y \subseteq X$ which contains $A$.
By the definition of the Kolchin topology, there is some $m$ and a Zariski closed $Y_m \subseteq \tau_m X$
with $Y = \nabla_m^{-1} Y_m$.   Since $A \subseteq Y(\UU)$, we have $\nabla_m(A) \subseteq Y_m$.  Thus,
$\overline{\nabla_m(A)}^\text{Zariski} \subseteq Y_m$ so that $\nabla_m^{-1} (\overline{\nabla_m(A)}^\text{Zariski}) \subseteq Y$.
Clearly,  for any $\ell \geq m$ if we set $Y_\ell := \nu_{\ell,m}^{-1} Y_m$, then
$Y = \nabla_\ell^{-1} Y_\ell$.  Thus, for all $\ell \geq m$ we have $\nabla_\ell^{-1} (\overline{\nabla_\ell(A)}^\text{Zariski}) \subseteq Y$.
On the other hand, by the observation above, we have $\bigcap_{M \geq 0} \nabla_M^{-1} (\overline{\nabla_M(A)}^\text{Zariski})
= \bigcap_{M \geq m} \nabla_M^{-1} (\overline{\nabla_M(A)}^\text{Zariski}) \subseteq Y$.

The moreover clause is an immediate consequence of the noetherianity of the Kolchin topology.
\end{proof}

We employ the above lemmata to relate the quotient of an algebraic variety by the constant points of an algebraic group
to algebraic quotients.

\begin{prop}
\label{difftoalgquot}
Let $\UU$ be a differentially closed field, $G$  an algebraic group over
the constant field $C = \UU^\Delta$, $X$ an algebraic variety over $C$ and $G \curvearrowright X$ an
action of $G$ on $X$ over $C$.   For $\UU$-points $a, b \in X(\UU)$ the following are equivalent.
\begin{itemize}
\item There is some $g \in G(C)$ with $g \cdot a = b$.
\item For all $m \in \NN$ there is some $g_m \in G(\UU)$ with $g_m \cdot \nabla_m (a) = \nabla_m(b)$.
\end{itemize}

Moreover, there is some natural number $M$ so that these conditions are equivalent to
\begin{itemize}
\item There is some $g_M \in G(\UU)$ with $g_M \cdot \nabla_M(a) = \nabla_M(b)$.
\end{itemize}
\end{prop}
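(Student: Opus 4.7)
Denote by (i) the existence of some $g \in G(C)$ with $g\cdot a = b$, by (ii) the existence, for every $m$, of $g_m \in G(\UU)$ with $g_m\cdot \nabla_m(a) = \nabla_m(b)$, and by (iii) the one-level version for a specific $M$ in the moreover clause. The plan is to prove (i)$\Rightarrow$(ii)$\Rightarrow$(iii) trivially, then close the loop (iii)$\Rightarrow$(i) by a Kolchin-closure computation combining the two lemmata just proved with Lemma~\ref{eqclose}. For (i)$\Rightarrow$(ii): any $g \in G(C)$ acts on $X$ by an automorphism defined over the constants, and because $X$ is over $C$ one has $\tau_m X = \cA_m X$ canonically, with $\nabla_m$ commuting with the induced $G(C)$-action; hence $\nabla_m(g\cdot a) = g\cdot \nabla_m(a)$ and $g_m := g$ witnesses (ii) uniformly in $m$. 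Implication (ii)$\Rightarrow$(iii) is immediate.

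For (iii)$\Rightarrow$(i), let $\sim$ denote the $\emptyset$-definable equivalence relation $x \sim y \iff (\exists g \in G(C))\,g\cdot x = y$, and set
$$S_a := \overline{G(C)\cdot a}^{\text{Kolchin}} = \overline{[a]_\sim}^{\text{Kolchin}},$$
with $S_b$ defined analogously. Each $g \in G(C)$ acts on $X(\UU)$ by a Kolchin homeomorphism (it is a $C$-rational automorphism), so $S_a$ is $G(C)$-invariant. By the Kolchin-closure lemma,
$$S_a \;=\; \bigcap_{m\geq 0} \nabla_m^{-1}\!\left(\overline{G(C)\cdot \nabla_m(a)}^{\text{Zariski}}\right),$$
and because $C = \UU^\Delta$ is algebraically closed and $G$ is defined over $C$, $G(C)$ is Zariski dense in $G_\UU$. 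Pushing this density through the orbit morphism $G \to \tau_m X$, $g \mapsto g\cdot \nabla_m(a)$, identifies the Zariski closure of the $G(C)$-orbit with that of the full $G(\UU)$-orbit, which I will denote $O_m(a)$.

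The displayed intersection is descending, so noetherianity of the Kolchin topology supplies stabilization indices $M_a$ and $M_b$ for $S_a$ and $S_b$ respectively; set $M := \max(M_a, M_b)$. Under hypothesis (iii), the relation $g_M\cdot \nabla_M(a) = \nabla_M(b)$ forces $\nabla_M(b) \in O_M(a)$ and, via $g_M^{-1}$, $\nabla_M(a) \in O_M(b)$; hence $b \in \nabla_M^{-1}(O_M(a)) = S_a$ and $a \in S_b$. The $G(C)$-invariance of $S_a$ then yields $S_b = \overline{G(C)\cdot b}^{\text{Kolchin}} \subseteq S_a$, and symmetrically $S_a \subseteq S_b$; so $S_a = S_b$ and Lemma~\ref{eqclose} delivers $a \sim b$, i.e.\ (i). (Assuming only (ii) in place of (iii) gives $O_m(a) = O_m(b)$ for every $m$ and the same conclusion follows term-by-term, so no stabilization index is needed for the basic equivalence.) The step I expect to be most delicate is the identification of $\overline{G(C)\cdot \nabla_m(a)}^{\text{Zariski}}$ with the full $G(\UU)$-orbit closure $O_m(a)$; this ultimately rests on the algebraic closedness of the field of constants in a differentially closed field, together with continuity of the orbit morphism.
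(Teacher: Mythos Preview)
Your argument is correct and follows essentially the same route as the paper: both identify $\overline{\nabla_m([a]_\sim)}^{\text{Zariski}}$ with the Zariski closure of the full $G(\UU)$-orbit of $\nabla_m(a)$ (the paper via a stabilizer computation, you via continuity of the orbit morphism $g \mapsto g\cdot\nabla_m(a)$ and Zariski density of $G(C)$---these are interchangeable), then feed this into the Kolchin-closure lemma and conclude with Lemma~\ref{eqclose}.

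One point deserves attention. Your noetherianity step yields a stabilization index $M = \max(M_a,M_b)$ that depends on the pair $(a,b)$, whereas the paper obtains the moreover clause by the compactness theorem, giving a single $M$ valid uniformly for all $a,b \in X(\UU)$. The uniform version is what is actually used downstream in Theorem~\ref{mainthm}, where one needs a fixed $N$ with $\widetilde{\chi} = \widetilde{\chi}_N \circ \nabla_N$. The fix is immediate: once you have established (i)$\Leftrightarrow$(ii) for all pairs (which your parenthetical remark already does), compactness applies since (i) is a definable condition and (ii) is the countable conjunction of the definable conditions indexed by $m$; hence (i) is equivalent to a finite subconjunction, i.e.\ to (iii) for some uniform $M$.
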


\begin{proof}
For this proof, we write $\overline{A}$ for the Zariski closure of a set $A$ and $\overline{A}^\text{Kolchin}$ for its
Kolchin closure.

Define the equivalence relation $\sim$ on $X(\UU)$ by
$$a \sim b :\Longleftrightarrow (\exists g \in G(C)) g \cdot a = b$$ and
let $\approx$ be given by
$$a \approx b :\Longleftrightarrow (\forall m \in \NN) (\exists g_m \in G(\UU)) g_m \cdot \nabla_m (a) = \nabla_m(b) \text{ .}$$

Note that $\sim$ is a definable equivalence relation while \emph{prima facie} $\approx$ is merely type-definable.   Clearly, $a \sim b
\Rightarrow a \approx b$ for if $g \in G(C)$ satisfies $g \cdot a = b$, then for each $m$ we may take $g_m := \nabla_m(g)$ (which belongs to the
image of the zero section as the constant points are precisely those whose image under $\nabla_m$ agree with the zero section) to
witness that there is some $g_m \in G(\UU)$ with $g_m \cdot \nabla_m(a) = \nabla_m(g) \cdot \nabla_m(a) = \nabla_m(g \cdot a) =
\nabla_m(b)$.

On the other hand, we observe that for each $m$ the Zariski closure of $\nabla_m ([a]_\sim)$ is the Zariski closure of the
$G(\UU)$-orbit of $\nabla_m(a)$.  Indeed, consider the algebraic group $S$ defined as the stabilizer of
$\overline{\nabla_m([a]_\sim)}$ in $G$:

$$S(\UU) :=  \{ g \in G(\UU) ~:~ g \cdot \overline{\nabla_m([a]_\sim)} \subseteq \overline{\nabla_m([a]_\sim)} \} $$

As $\nabla_m ([a]_\sim)$ is a homogeneous space for $G(C)$ under the natural action of $G(\UU)$ on $\tau_m X$, we see that
$G(C) \subseteq S(\UU) \subseteq G(\UU)$.  As $G(C)$ is Zariski dense in $G(\UU)$ and $S$ is closed, we conclude that $S = G$.
Thus, $\overline{\nabla_m([a]_\sim)} \supseteq  \overline{ G \cdot \nabla_m(a)}$.   On the other hand, since $\nabla_m([a]_\sim)
= G(C) \cdot \nabla_m(a) \subseteq G(\UU) \cdot \nabla_m(a)$, we must have equality.

Therefore, if $a \approx b$, then for each $m \in \NN$ by definition over $\approx$ we have that $G(\UU) \cdot \nabla_m (a) =
G(\UU) \cdot \nabla_m(b)$ so that  $\overline{\nabla_m([a]_\sim)} = \overline{ G(\UU) \cdot \nabla_m(a)} = \overline{ G(\UU) \cdot \nabla_m(b)}
= \overline{[b]_\sim}$.  As this equality is true for every $m$, $\overline{[a]_\sim}^{\text{Kolchin}} = \overline{[b]_\sim}^{\text{Kolchin}}$.
By Lemma~\ref{eqclose}, $a \sim b$.

The moreover clause follows by the compactness theorem.
\end{proof}

It follows from elimination of imaginaries in algebraically closed fields that if the algebraic group $H$ acts on an
algebraic variety $V$, then the quotient $V \to H \backslash V$ may be realized constructibly.  In fact, while the map expressing the
quotient cannot be taken to be regular, it may be assumed to have some regularity properties.

\begin{lem}
\label{regquot}
Let $K$ be an algebraically closed field, $H$ an algebraic group over $K$, $V$ an algebraic variety over $K$, and
$H \curvearrowright V$ an action of $H$ on $V$, also defined over $K$.  Then there are
\begin{itemize}
\item a chain of closed algebraic varieties  $\varnothing = V_0 \subsetneq V_1 \subsetneq \cdots \subsetneq V_m$,
\item an algebraic variety $W$, and
\item regular maps $\xi_i:V_i \smallsetminus V_{i-1} \to W$ for each positive $i \leq m$
\end{itemize}

so that
\begin{itemize}
\item each $V_i$ is $H$-invariant and
\item for $a, b \in (V_i \smallsetminus V_{i-1})(K)$ one has $\xi_i(a) = \xi_i(b) \Longleftrightarrow (\exists g \in H(K)) ~ g \cdot a = b$.
\end{itemize}
\end{lem}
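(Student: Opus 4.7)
The plan is to proceed by noetherian induction on $V$, using Rosenlicht's theorem on generic quotients as the key tool. First I will establish the following claim: for any algebraic group action of $H$ on a non-empty algebraic variety $V'$ over $K$, there exists an $H$-invariant non-empty open subset $U \subseteq V'$ and a regular map $\xi: U \to Q$ to an algebraic variety $Q$ such that for $a,b \in U(K)$ one has $\xi(a) = \xi(b)$ if and only if $a$ and $b$ lie in the same $H(K)$-orbit.

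Granted the claim, I build the chain top-down. Set $V_m := V$ and apply the claim to obtain an $H$-invariant open $U_m \subseteq V_m$ with regular quotient $\xi_m: U_m \to W_m$. Define $V_{m-1} := V_m \smallsetminus U_m$; this is closed in $V$ and $H$-invariant (as the complement of an $H$-invariant open), and it is properly contained in $V_m$ since $U_m$ is non-empty. Apply the claim to the action of $H$ on $V_{m-1}$ to produce $U_{m-1}$ and $\xi_{m-1}$, set $V_{m-2} := V_{m-1} \smallsetminus U_{m-1}$, and continue. By the noetherianity of the Zariski topology on $V$, this strictly descending chain of closed subvarieties terminates after finitely many steps, necessarily at $\varnothing$. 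Reindexing yields $\varnothing = V_0 \subsetneq V_1 \subsetneq \cdots \subsetneq V_m = V$ with each $V_i$ closed and $H$-invariant and each stratum $V_i \smallsetminus V_{i-1}$ equal to $U_i$ with its regular quotient $\xi_i: U_i \to W_i$. Finally, take $W := W_1 \sqcup \cdots \sqcup W_m$ (a disjoint union of algebraic varieties, hence itself an algebraic variety) and compose each $\xi_i$ with the inclusion $W_i \hookrightarrow W$ to obtain regular maps into a single $W$.

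The main obstacle is the claim itself, in particular its extension to possibly reducible $V'$. The classical Rosenlicht theorem handles irreducible $V'$ directly. For reducible $V'$ one exploits that the identity component $H^\circ$ preserves each irreducible component and the finite group $H/H^\circ$ permutes them in finite orbits: pick an irreducible component $V''$ of maximal dimension, apply the classical theorem to the action of $\mathrm{Stab}_H(V'')$ on $V''$ restricted to the open complement of the other irreducible components, and then push forward by representatives of $H/\mathrm{Stab}_H(V'')$ to obtain a disjoint $H$-invariant open subset of $V'$ together with a regular quotient that records both the coset identifying which translate a point lies in and the value of the original quotient on the back-translate; this map recovers exactly the $H(K)$-orbits. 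Once this ingredient is in place, the rest is the routine noetherian induction sketched above.
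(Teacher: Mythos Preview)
Your overall strategy---noetherian induction, peeling off an $H$-invariant dense open admitting a regular quotient at each stage---matches the paper's.  The key difference is the tool that produces that open set.  You invoke Rosenlicht's theorem, whereas the paper starts from elimination of imaginaries in $\mathrm{ACF}$ to obtain a single global constructible function $\psi:V\to W$ expressing $H\backslash V$, takes the dense open $U$ on which $\psi$ is regular, and then enlarges to $U':=H\cdot U$, using the $H$-invariance of $\psi$ to see that regularity persists on each translate $gU$.  The paper's route has the advantage that the target $W$ is fixed once and for all (the $\xi_i$ are all restrictions of the one $\psi$), so no disjoint-union bookkeeping is needed; your route stays entirely within classical algebraic geometry and avoids the model-theoretic input.

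There is, however, a slip in your handling of the reducible case.  You describe the quotient map on $\bigsqcup_i g_i U''$ as recording both the coset $g_i\,\mathrm{Stab}_H(V'')$ and the value $\xi''(g_i^{-1}y)$.  That map is too fine: an $H$-orbit through a point of $U''$ meets \emph{every} translate $g_i U''$, so points in distinct translates can lie in the same $H$-orbit yet receive different coset labels.  (Think of $H=\ZZ/2\ZZ$ swapping two points.)  The correct map sends $y\in g_i U''$ simply to $\xi''(g_i^{-1}y)\in Q''$; this is well-defined because the translates are disjoint, it is regular on each piece, and it identifies precisely the $H(K)$-orbits since $H\cdot x\cap g_iU''=g_i(\mathrm{Stab}_H(V'')\cdot x)$.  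With this correction your argument goes through.
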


\begin{proof}
We work by noetherian induction on $V$ with the case that $\dim(V) = 0$ being immediate.
By elimination of imaginaries we find a constructible function $\psi:V \to W$ to some algebraic variety $W$ expressing $H \backslash V$.  (Note:
we are not claiming that $W = H \backslash V$.)   As $\psi$ is a constructible function, we can find a dense open $U \subseteq V$ for which
$\psi \upharpoonright U$ is regular.  Set $U' := H \cdot U$ which is again a dense open subset of $V$.  We claim that $\psi \upharpoonright U'$
is regular.  Indeed, we may cover $U'$ with charts of the form $g S$ where $S \subseteq U$ is an open affine in $U$ and $g \in G(K)$.
By the $H$-invariance of
$\psi$, we see that on $g S$, $\psi$ satisfies $\psi(gx) = \psi(x)$.  That is, $\psi \upharpoonright gS$ agrees with $\psi \upharpoonright S$
via the isomorphism $g\cdot:S \to gS$.  Thus, $\psi \upharpoonright gS$ is regular and therefore $\psi \upharpoonright U'$ is regular.  Set
$V' := V \smallsetminus U'$.  By induction, $V'$ admits the requisite chain, of length $m$, say.  Let $V_{m+1} := X$.
\end{proof}

We are now in a position to prove an algebraic counterpart of our main theorem.

\begin{prop}
\label{mainthmalg}
With the notation and hypotheses as introduced at the beginning of this section,
if $\eta:Y \to Z$ is a constructible function from $Y$ to some algebraic variety $Z$ expressing the quotient $Y \to G \backslash Y$
as in Lemma~\ref{regquot}, 
then $\chi:X \to Z$ defined by $\chi := \eta \circ \pi^{-1}$ (for any choice of a branch of $\pi^{-1}$)
is a constructible function. 
\end{prop}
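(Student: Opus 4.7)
The plan is to proceed in three stages. First, verify that $\chi = \eta \circ \pi^{-1}$ is independent of the branch of $\pi^{-1}$ chosen; second, produce an algebraic stratification of $X$ that mirrors the $V_i$-stratification of $Y$ provided by Lemma~\ref{regquot}; and third, apply the Peterzil--Starchenko Theorem~\ref{PSGAGA} to promote o-minimal definability of the graph of $\chi$ on each stratum to algebraic regularity.

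Well-definedness is immediate: two preimages $u_1, u_2 \in U$ of a point $x \in X(\CC)$ differ by the action of some $\gamma \in \Gamma \subseteq G(\CC)$. Since the subvarieties $\varnothing = V_0 \subsetneq \cdots \subsetneq V_m = Y$ of Lemma~\ref{regquot} are $G$-invariant, the lifts $u_1, u_2$ lie in the same stratum $V_i \smallsetminus V_{i-1}$, and $\eta = \xi_i$ separates $G(\CC)$-orbits on this stratum, so $\eta(u_1) = \eta(u_2)$.

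For the stratification of $X$, set $U_i := U \cap V_i(\CC)$ and $X_i := \pi(U_i)$. Because $V_i$ is $G$-invariant, $U_i$ is $\Gamma$-invariant and closed analytic in $U$; since $\pi$ is a covering map, $\Gamma$-invariant closed analytic sets in $U$ descend to closed analytic sets in $X(\CC)$, so each $X_i$ is closed analytic. Using the fundamental domain one has $X_i = \pi(F \cap V_i(\CC))$, so $X_i$ is $\RR_\cF$-definable. Theorem~\ref{PSGAGA} then yields that each $X_i$ is an algebraic subvariety of $X$, giving a chain $\varnothing = X_0 \subsetneq \cdots \subsetneq X_m = X$.

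Finally, on each open stratum $X_i \smallsetminus X_{i-1}$ the graph of $\chi$ equals
$$\bigl\{(x,z) \in (X_i \smallsetminus X_{i-1}) \times Z ~:~ (\exists u \in F \cap (V_i \smallsetminus V_{i-1})(\CC)) ~ \pi(u) = x \text{ and } \xi_i(u) = z\bigr\},$$
which is $\RR_\cF$-definable since $F$, $\pi \upharpoonright F$, and $\xi_i$ are all definable, and is a closed complex analytic subset of $(X_i \smallsetminus X_{i-1}) \times Z$ because locally on an evenly covered neighborhood $V \subseteq U$ the map $\chi$ equals $\xi_i \circ (\pi \upharpoonright V)^{-1}$, a composition of a biholomorphism with a regular morphism. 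A second application of Theorem~\ref{PSGAGA} promotes this graph to an algebraic subvariety, so $\chi \upharpoonright X_i \smallsetminus X_{i-1}$ is regular; assembling the strata yields the desired constructible function. The main obstacle is in the second stage, namely verifying carefully that each $X_i$ is genuinely a closed complex analytic and $\RR_\cF$-definable subset of $X(\CC)$ so that PS-GAGA can be invoked, which rests on the covering-map structure of $\pi$ combined with the fundamental-domain property of $F$; once the algebraic stratification is in place, the third stage is a formally analogous application of the same theorem.
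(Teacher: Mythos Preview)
Your argument is correct and follows the same overall strategy as the paper: establish definability of the graph of $\chi$ via the fundamental domain $F$, establish analyticity via local sections of the covering map composed with the regular maps $\xi_i$ from Lemma~\ref{regquot}, and invoke Theorem~\ref{PSGAGA}. The one genuine organizational difference is where you place the stratification. The paper pushes the $V_i$ forward along $\eta$ to obtain constructible subsets $W_i := \eta(V_i) \subseteq Z$, then slices the graph $\Xi$ by $X \times (W_i \smallsetminus W_{i-1})$; since $\eta$ is already constructible, the $W_i$ come for free and only one PS-GAGA step per stratum is needed. You instead push the $V_i$ forward along $\pi$ to produce closed subsets $X_i := \pi(U \cap V_i(\CC)) \subseteq X(\CC)$ and stratify the source; this costs you an additional PS-GAGA application (to promote each $X_i$ from closed-analytic-and-definable to algebraic), but in exchange your strata $X_i \smallsetminus X_{i-1}$ are genuinely locally closed subvarieties of $X$, so the ambient space for the second PS-GAGA step is honestly quasiprojective. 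One minor overclaim: from the algebraicity of the graph on each stratum you conclude that $\chi \upharpoonright (X_i \smallsetminus X_{i-1})$ is \emph{regular}, but a bijective algebraic graph over a possibly non-normal base need not arise from a morphism; what you actually get, and all that is needed, is constructibility.
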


\begin{proof}   For this proof, by a differential field $M$ we mean one with field of constants $M^\Delta = \CC$.

Consider the following set.

$$\Xi := \{ (x,z) \in X(\CC) \times Z(\CC) ~:~ (\exists y \in Y (\CC) ) ~ \pi (y) = x ~\&~ \eta (y) = z \}$$

Let us observe that the set $\Xi$ is definable.
Indeed, because $\pi$ is $\Gamma$-invariant and $\eta$ is $G$-invariant (hence,
also $\Gamma$-invariant), in the definition of $\Xi$ we may restrict $y$ to $F$.   That is, we have

$$\Xi =  \{ (x,z) \in X(\CC) \times Z(\CC) ~:~ (\exists y \in F ) ~ \pi (y) = x ~\&~ \eta(y) = z \} \text{ .}$$

As $\pi \upharpoonright F$ is definable, this expression presents $\Xi$ as a definable set.

Not only is the set $\Xi$ definable, but it is the graph of a function.  Indeed,  using definable choice in $\RR_\cF$,
we see that there is a definable function $\zeta:X(\CC) \to F \subseteq Y (\CC)$ which is a right inverse to $\pi$.  Again using
the $\Gamma$-invariance of $\eta$ we see that $\Xi$  is the graph of $\eta \circ \zeta$.    Let
us write $\chi:X \to Z$ for the function whose graph is $\Xi$.  Let us note that while this definition of $\chi$
expresses its definability, there are other ways it could be presented.  Indeed,
from the $\Gamma$-invariance of $\eta$ for any $x \in X(\CC)$ we have
$\chi(x) = \eta (\hat{\zeta}(x))$ where $\hat{\zeta}$ is \emph{any} branch of $\pi^{-1}$ near $x$.

By Lemma~\ref{regquot}, there is a sequence of a closed subvarieties $\varnothing = V_0 \subsetneq V_1 \subsetneq
\cdots \subsetneq V_\ell = Y$ so that each $V_i$ is $G$-invariant and the restriction of
$\eta$ to $V_i \smallsetminus V_{i-1}$ is regular.   For each $i \leq \ell$, define
$\xi_i := (\eta \upharpoonright V_i) \circ \zeta: X \to Z$.   Note that $\xi_\ell = \chi$.
We show by induction on $i$ that
$\xi_i$ is constructible.     The case of $i = 0$ is trivial.  Let us consider now the case that $i > 0$.
Let us define $W_i := \eta (V_i) \subseteq Z$.
Since $\eta$ is a constructible function, $W_i$ is a constructible subset of $Z$.
Consider $\Xi_i := \Xi \cap (X \times (W_i \smallsetminus W_{i-1}))$.  The set $\Xi_i$ being the intersection of
two definable sets is definable.   Moreover, our second presentation of $\chi$ shows that
$\Xi_i$ is the graph of an analytic function.  Indeed, if $(x,y) \in \Xi_i$, then fix a branch $\hat{\zeta}$
of $\pi^{-1}$ near $x$.  We have $y = \eta (\hat{\zeta} (x))$ and, in fact, near $(x,y)$,
$\Xi_i$ is the graph of $\eta \circ \hat{\zeta}$.   As $y \in W_i \smallsetminus W_{i-1}$ and
$V_i$ and $V_{i-1}$ are $G$-invariant, necessarily $\hat{\zeta}(x) \in (V_i \smallsetminus V_{i-1})(\CC)$.
Thus, near $(x,y)$, $\Xi_i$ is the graph of the analytic function $(\eta \upharpoonright (V_i \smallsetminus V_{i-1}))
\circ \hat{\zeta}$.   By the Peterzil-Starchenko o-minimal GAGA Theorem~\ref{PSGAGA}, $\Xi_i$ is algebraic.  By induction,
$\xi_{i-1}$ is algebraically constructible and the graph of $\xi_i$ is simply the union of $\Xi_i$ and the graph
of $\xi_{i-1}$.  Hence, $\xi_i$ is itself algebraically constructible.

Taking $i = \ell$, we see that $\chi$ is algebraically constructible.
\end{proof}

We deduce our main theorem from Proposition~\ref{mainthmalg}.

\begin{theorem}
\label{mainthm}
With the notation and hypotheses as introduced at the beginning of this section,
if $\widetilde{\chi}:Y \to Z$ is a generalized Schwarzian for the action of $G^\Delta$ on $Y$, that is, it
is a differential constructible function from $Y$ to some algebraic variety $Z$ expressing the quotient $Y \to G^\Delta \backslash Y$,
then $\chi:X \to Z$ defined by $\chi := \widetilde{\chi} \circ \pi^{-1}$ (for any choice of a branch of $\pi^{-1}$)
is a differential constructible function, which we shall call the generalized logarithmic derivative associated to $\pi$.
\end{theorem}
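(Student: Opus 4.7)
The plan is to reduce to Proposition~\ref{mainthmalg} by applying it at the level of prolongation/jet spaces, and then to pull the resulting algebraically constructible function back via $\nabla_M$ to obtain a differential constructible function.

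First, I would reduce $\widetilde{\chi}$ to algebraic data at finite jet level. Since $\widetilde{\chi}$ is differential constructible, write $\widetilde{\chi} = \widetilde{\chi}_M \circ \nabla_M$ for some algebraically constructible $\widetilde{\chi}_M : \tau_M Y \to Z$ at some order $M$.  Using Proposition~\ref{difftoalgquot}, on increasing $M$ if necessary, the $G^\Delta$-equivalence on $Y(\UU)$ encoded by $\widetilde{\chi}$ agrees with the algebraic $G(\UU)$-equivalence on $\tau_M Y(\UU) = \cA_M Y(\UU)$.  Applying Lemma~\ref{regquot} to the regular action of $G$ on $\cA_M Y$, I would then arrange $\widetilde{\chi}_M$ to be precisely a constructible function expressing the algebraic quotient $\cA_M Y \to G \backslash \cA_M Y$ in the sense of that lemma.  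Any two such presentations of $\widetilde{\chi}$ differ only up to an algebraic correspondence, so this choice is harmless.

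Next, I would apply Proposition~\ref{mainthmalg} with $Y$ replaced by $\cA_M Y$, $X$ by $\cA_M X$, $U$ by $J_M U$, $\pi$ by $J_M \pi$, $F$ by $J_M F$, and $\eta$ by $\widetilde{\chi}_M$.  The hypotheses transfer: $G$ acts regularly on $\cA_M Y$ by functoriality of the arc-space construction; Lemma~\ref{jetcover} provides the $\Gamma$-action on $J_M U$ and identifies $\cA_M X(\CC)$ with $\Gamma \backslash J_M U$; Lemma~\ref{jetdef} gives definability of $J_M \pi \upharpoonright J_M F$; Zariski density and discreteness of $\Gamma$ in $G(\CC)$ are unchanged; and surjectivity of $J_M \pi \upharpoonright J_M F$ onto $\cA_M X(\CC)$ holds because $\pi$ is a local biholomorphism on the open set $F$.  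The conclusion yields an algebraically constructible function $\chi_M : \cA_M X \to Z$ given, on any compatible branch, by $\chi_M = \widetilde{\chi}_M \circ (J_M \pi)^{-1}$.

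To finish, I would verify the identity $\chi = \chi_M \circ \nabla_M$, which exhibits $\chi$ as a differential constructible function.  This reduces to the chain-rule identity
$$\nabla_M(\pi^{-1}(a)) = (J_M \pi)^{-1}(\nabla_M(a)),$$
valid on compatible local branches; it is the Fa\`{a} di Bruno formula for jets already invoked in Lemma~\ref{jetdef}, combined with the fact that $\pi$ lands in a variety $X$ defined over $\CC$ so that the analytic jet and differential-algebraic prolongation constructions coincide on the relevant points.  I expect the subtlest bookkeeping to be the argument that branch ambiguities are absorbed consistently: one must use $\Gamma$-invariance of $\widetilde{\chi}_M$ (inherited from $G$-invariance via $\Gamma < G(\CC)$) to see that the right-hand side $\chi_M \circ \nabla_M$ is independent of the chosen branch of $\pi^{-1}$, and then match this common value with $\widetilde{\chi} \circ \pi^{-1}$ on every meromorphic-function field of points, which is where Proposition~\ref{difftoalgquot} is used to guarantee that the algebraic quotient at level $M$ truly computes the differential Schwarzian.
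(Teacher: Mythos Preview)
Your proposal is correct and follows essentially the same route as the paper's proof: reduce $\widetilde{\chi}$ to an algebraic quotient map $\widetilde{\chi}_N$ on $\cA_N Y$ via Proposition~\ref{difftoalgquot}, apply Proposition~\ref{mainthmalg} at the jet level (with Lemmas~\ref{jetcover} and~\ref{jetdef} supplying the transferred hypotheses), and then precompose the resulting constructible $\chi_N$ with $\nabla_N$. Your discussion of branch ambiguities and the $\nabla$/jet compatibility is more explicit than the paper's, but the architecture is identical.
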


\begin{proof}

By Proposition~\ref{difftoalgquot} for $N \gg 0$ there is a constructible function $\widetilde{\chi}_N:\cA_N Y \to Z$
so that $\widetilde{\chi}:Y \to Z$ takes the form $\widetilde{\chi} := \widetilde{\chi}_N \circ \nabla_N$ and
for any differential field $M$ with field of constants $\CC$ one has that
\begin{itemize}
\item for points $a, b \in Y(M)$
$$\widetilde{\chi}(a) = \widetilde{\chi}(b) \Longleftrightarrow (\exists g \in G(\CC)) ~ g \cdot a = b$$
\item for points $a, b \in \cA_N Y(M)$
$$\widetilde{\chi}_N(a) = \widetilde{\chi}_N(b) \Longleftrightarrow (\exists g \in G(M^{\operatorname{alg}})) ~ g \cdot a = b \text{ .}$$
\end{itemize}

With this choice of $N$, we may realize our generalized logarithmic derivative $\chi:X \to Z$ as $\chi = \widetilde{\chi} \circ \pi^{-1} =
\widetilde{\chi}_N \circ J_N(\pi)^{-1} \circ \nabla_N$ as in the following diagram.

$$
\xymatrix{Y \ar@/^/[rr]^{\nabla_N}  & & \cA_N(Y) \ar[ll] \ar[rr]^{\widetilde{\chi}_N} & & Z \\
U \ar@{_{(}->}[u] \ar[d]^{\pi} & & J_N(U) \ar@{_{(}->}[u] \ar[ll]  \ar[d]^{J_N(\pi)} & &  \\
X \ar@/^/[rr]^{\nabla_N} \ar@/_4pc/[rrrruu]_{\chi} \ar@{.>}@/^/[u]^{\pi^{-1}} & & \cA_N(X) \ar[ll] \ar@/^/@{.>}[u]^{J_N(\pi)^{-1}} \ar[uurr]^{\chi_N} \\
}
$$

Indeed, the hypotheses of Proposition~\ref{mainthmalg} apply with $\cA_N Y$ in place of $Y$,
$J_N(U)$ in place of $U$, $J_N(F)$ in place of $F$, $J_N(\pi)$ in place of $\pi$, $\cA_N X$ in
place of $X$, and $\widetilde{\chi}_N$ in place of $\eta$.  Thus, by Proposition~\ref{mainthmalg},
$\chi_N := \widetilde{\chi}_N \circ J_N(\pi)^{-1}:\cA_N X \to Z$ is constructible.   Hence,
$\chi = \chi_N \circ \nabla_N:X \to Z$ is differential constructible.

\end{proof}

\begin{Rk}
The construction of $\chi$ as a differential meromorphic function is classical.  The new content of Theorem~\ref{mainthm} is that
$\chi$ is differential \emph{constructible} and that it is defined everywhere on $X$.
\end{Rk}

\begin{Rk}
The function $\chi$ depends on the choice of $\widetilde{\chi}$, which is itself well-defined only up to differential constructible
isomorphism.  However, the equivalence relation defined by $a \sim b :\Longleftrightarrow \chi(a) = \chi(b)$ is intrinsic.
\end{Rk}

\section{Towards a theory of generalized logarithmic derivatives}
\label{theorygld}

Using our construction of the generalized logarithmic derivatives it is possible to deduce differential
algebraic properties of these maps from their analytic interpretation.  In this section we draw some of these
conclusions though we leave a fine analysis for a future work.

Throughout this section we shall work inside a differentially closed field $(\UU,\Delta)$ which we shall realize as a differential
subfield of some differential field $M$ of germs of meromorphic functions.

We shall work with the conventions and notation of Section~\ref{mt}.  By way of notation, for $a \in X(\UU)$ we denote by
$F_a$ the differential subvariety of $X$ defined by $\chi(x) = \chi(a)$.

Let us begin with a simple observation about the meaning of two points having the same image under $\chi$.

\begin{prop}
\label{gachi}
For $a, b \in X(\UU)$, one has $\chi(a) = \chi(b)$ if and only if for any choice of a branch of $\pi^{-1}$
there is some $g \in G(\CC)$ so that $\pi(g \cdot \pi^{-1}(a)) = b$.
\end{prop}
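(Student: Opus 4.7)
The proof should be a direct unwinding of the definitions of $\chi$ and $\widetilde{\chi}$, with the only subtlety being the careful handling of the multivaluedness of $\pi^{-1}$ via the $\Gamma$-action. The plan is to show the equivalence by running through the defining property of the generalized Schwarzian $\widetilde{\chi}$ in both directions, after fixing the setup that $a, b \in X(\UU)$ are to be understood as germs of meromorphic maps into $X$ (via $\UU \hookrightarrow M$), so that branches $\sigma$ of $\pi^{-1}$ near $a$ produce well-defined elements $\sigma(a) \in Y(\UU')$ in some extension differential field, and similarly for $b$.

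For the forward direction, I would start with $\chi(a) = \chi(b)$ and unpack: pick any branch $\sigma$ of $\pi^{-1}$ near $a$ and any branch $\tau$ of $\pi^{-1}$ near $b$. By definition of $\chi$ as $\widetilde{\chi} \circ \pi^{-1}$, the equality $\chi(a) = \chi(b)$ amounts to $\widetilde{\chi}(\sigma(a)) = \widetilde{\chi}(\tau(b))$ (this value is independent of the choice of branch precisely because $\widetilde{\chi}$ is $\Gamma$-invariant, since $\Gamma \subseteq G(\CC)$ and $\widetilde{\chi}$ is constant on $G(\CC)$-orbits). Invoking the defining property of $\widetilde{\chi}$ recalled before Proposition~\ref{difftoalgquot}---namely that $\widetilde{\chi}(p) = \widetilde{\chi}(q)$ iff there exists $g \in G(\CC)$ with $g \cdot p = q$, valid over any differential field with field of constants $\CC$---yields some $g \in G(\CC)$ with $g \cdot \sigma(a) = \tau(b)$. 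Applying $\pi$ to both sides and using $\pi \circ \tau = \operatorname{id}$ gives $\pi(g \cdot \sigma(a)) = b$, which is what was wanted for the chosen $\sigma$. Since $\sigma$ was arbitrary, this handles every branch.

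For the reverse direction, suppose some branch $\sigma$ of $\pi^{-1}$ near $a$ and $g \in G(\CC)$ satisfy $\pi(g \cdot \sigma(a)) = b$. Then $g \cdot \sigma(a)$ is itself a $\pi$-preimage of $b$, so for any fixed branch $\tau$ of $\pi^{-1}$ near $b$ there is some $\gamma \in \Gamma$ with $\tau(b) = \gamma \cdot g \cdot \sigma(a)$. Because $\Gamma \subseteq G(\CC)$, the element $\gamma g$ still belongs to $G(\CC)$, so the defining property of $\widetilde{\chi}$ gives $\widetilde{\chi}(\sigma(a)) = \widetilde{\chi}(\tau(b))$, i.e.\ $\chi(a) = \chi(b)$.

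The main (and essentially only) obstacle is ensuring that these manipulations make sense when $a, b$ are $\UU$-points rather than complex points. This is handled by the Seidenberg-type realization of $\UU$ inside a field of germs of meromorphic functions described at the start of the section, together with the observation (needed to justify the branch-independence of $\chi$) that any two local branches of $\pi^{-1}$ differ by a $\Gamma$-translation and that $\widetilde{\chi}$, being invariant under the full $G(\CC)$-action, is \emph{a fortiori} invariant under $\Gamma$. Once this is in place, the equivalence reduces to the tautology above.
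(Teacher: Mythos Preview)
Your proposal is correct and follows essentially the same approach as the paper: unwind $\chi = \widetilde{\chi}\circ\pi^{-1}$ and invoke the defining property of $\widetilde{\chi}$. The paper's proof is a two-line version of exactly this argument; your version simply fills in the details about handling branches via $\Gamma$-translations and the Seidenberg realization of $\UU$ in a field of germs, which the paper leaves implicit.
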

\begin{proof}
By construction, $\chi = \widetilde{\chi}  \circ \pi^{-1}$ where $\widetilde{\chi}(x) = \widetilde{\chi}(y)$ if and
only if there is some $g \in G(M^\Delta) = G(\CC)$ with $g \cdot x = y$.  Hence, $\chi(a) = \chi(b)$ if and only if
there is some $g \in G(\CC)$ with $g \cdot \pi^{-1}(a) = \pi^{-1} (b)$, or equivalently, $\pi(g \cdot \pi^{-1}(a)) = b$.
\end{proof}

Using again the analytic interpretation of $\chi$ we compute the dimension of a fibre of $\chi$.

\begin{prop}
Let $a \in X(\UU)$.
Then $\dim(F_a) \leq \dim(G^\Delta)$ (which is the dimension of $G$ as an algebraic group).
In fact, if $\pi^{-1}$ is any branch of the inverse of $\pi$, then recalling that $\UU$ is a field of germs of meromorphic
functions, $\dim(F_a) = \dim(G^\Delta) - \dim(G^\Delta_{\pi^{-1}(a)})$, where
$G^\Delta_{\pi^{-1}(a)}$ is the stabilizer of $\pi^{-1}(a)$ in $G^\Delta$.
\end{prop}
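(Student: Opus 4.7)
The plan is to identify the fibre $F_a$ with the image under $\pi$ of a $G^\Delta$-orbit in $Y$, and then apply the orbit-stabilizer dimension formula.

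First, choose a branch $\pi^{-1}$ of the inverse of $\pi$ defined near $a$, and set $y := \pi^{-1}(a) \in U \subseteq Y(M)$. By Proposition~\ref{gachi}, the $\UU$-points of $F_a$ are exactly
$$F_a(\UU) \;=\; \{\pi(g \cdot y) \;:\; g \in G^\Delta(\UU) = G(\CC)\} \;=\; \pi(G^\Delta \cdot y).$$
Since $\pi$ is an analytic covering map, about any point of the orbit $G^\Delta \cdot y \subseteq U$ there is a neighborhood on which $\pi$ restricts to a biholomorphism. A local biholomorphism identifies rings of germs of meromorphic functions on either side, and so induces an isomorphism of the corresponding differential function fields over $\UU$. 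Consequently the restriction of $\pi$ to the orbit $G^\Delta \cdot y$ is a differential isomorphism onto its image $F_a$, giving
$$\dim F_a \;=\; \dim(G^\Delta \cdot y).$$

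Second, consider the orbit map $\sigma_y: G^\Delta \to Y$, $g \mapsto g \cdot y$, regarded as a morphism of differential varieties over $\UU\langle y\rangle$. Its image is the orbit $G^\Delta \cdot y$ and its fibre over $y$ is precisely the stabilizer subgroup $G^\Delta_y$. The standard orbit-stabilizer formula applied to the differential algebraic action of $G^\Delta$ on $Y$ then yields
$$\dim(G^\Delta \cdot y) \;=\; \dim G^\Delta - \dim G^\Delta_y.$$
Combined with the observation from Section~\ref{prelim} that for $G$ an algebraic group over $\CC$ the differential variety $G^\Delta$ has dimension equal to $\dim G$ as an algebraic variety, this gives the asserted equality $\dim F_a = \dim G^\Delta - \dim G^\Delta_{\pi^{-1}(a)}$, and the inequality $\dim F_a \leq \dim G^\Delta$ is immediate.

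The main obstacle is the first step: verifying rigorously that a local analytic inverse of $\pi$ yields a differential isomorphism preserving differential algebraic dimension. The point to justify is that although $\pi$ is only a complex analytic (not algebraic) local isomorphism, the fact that $\UU$ sits inside a differential field $M$ of germs of meromorphic functions means that evaluation at corresponding germs gives mutually inverse differential ring maps between the analytic function fields at $y$ and at $a$; restricting to the $\UU$-subalgebras generated by the coordinates of $g\cdot y$ and of $\pi(g\cdot y)$ identifies $\UU\langle G^\Delta \cdot y\rangle$ with $\UU\langle F_a\rangle$, so the two differential transcendence degrees over $\UU$ coincide.
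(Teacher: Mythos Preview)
Your strategy—identify $F_a$ with $\pi$ of the $G^\Delta$-orbit of $y=\pi^{-1}(a)$ and then apply orbit--stabilizer—is the right conceptual picture, but the argument has a genuine gap at exactly the point you flag as the ``main obstacle,'' and your attempted resolution does not close it.

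The difficulty is this: $\pi$ is analytic, not differential algebraic, so there is no \emph{a priori} reason that it carries the Kolchin closure of $G(\CC)\cdot y$ in $Y$ to $F_a$ in a dimension-preserving way.  Your final paragraph speaks of ``mutually inverse differential ring maps between the analytic function fields at $y$ and at $a$'' and of identifying $\UU\langle G^\Delta\cdot y\rangle$ with $\UU\langle F_a\rangle$, but neither object is what controls the dimension: $\dim F_a = \operatorname{tr.deg}_\UU \UU\langle b\rangle$ for $b$ a \emph{generic} point of $F_a$, and expressing the coordinates and derivatives of $b=\pi(g\cdot y)$ in terms of those of $g\cdot y$ introduces the values at $g\cdot y$ of all partial derivatives of the analytic map $\pi$, which need not lie in $\UU\langle g\cdot y\rangle$.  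Moreover, the orbit lives naturally over $\UU\langle y\rangle$ while $F_a$ is defined over $\UU$, and Proposition~\ref{gachi} only applies to points of $X(\UU)$, so the set-theoretic identification $F_a(\UU)=\pi(G(\CC)\cdot y)$ already requires care (the right-hand side may contain points outside $X(\UU)$).

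The paper's proof circumvents all of this by passing to the $N$th jet level for $N\gg 0$.  There $F_a$ becomes the algebraic fibre $(F_a)_N=\chi_N^{-1}(\chi(a))\subseteq\cA_N X$, the orbit becomes the algebraic $G$-orbit $\widetilde{\chi}_N^{-1}(\chi(a))\subseteq\cA_N Y$, and the comparison of their dimensions is effected by the honest covering map $J_N(\pi):J_N(U)\to\cA_N X(\CC)$, which visibly preserves complex-analytic (hence algebraic) dimension.  The passage from differential dimension of $F_a$ to algebraic dimension of $(F_a)_N$, and from the $G^\Delta$-stabilizer of $y$ to the $G$-stabilizer of $\nabla_N(y)$, is exactly the content of Proposition~\ref{difftoalgquot} and the surrounding lemmas.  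In short, the jet-space reduction is not an alternative route but the mechanism that makes your heuristic rigorous; without it the argument is incomplete.
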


\begin{proof}
As in the proof of Theorem~\ref{mainthm}, we take $N \gg 0$ large enough
so that $\chi = \widetilde{\chi}_N \circ J_N(\pi)^{-1} \circ \nabla_N$ where $\widetilde{\chi}_N:\cA_N Y \to Z$
is an algebraically constructible map expressing the quotient $\cA_N Y \to G \backslash \cA_N Y$.
Then the differential variety $F_a$ is the pullback by $\nabla_N$ of the algebraic subvariety $(F_a)_N$ of $\cA_N X$
defined by $\chi_N(x) = \chi_N(\nabla_N(a))$.  The preimage $\widetilde{F}_a$ of $(F_a)_N$ in $J_N(U)$ is
a complex analytic variety which contains $G(\CC) \cdot J_N(\pi)^{-1}(\nabla_N(a))$.  As the map $J_N(\pi)$
is a covering map, the analytic variety $\widetilde{F}_a$ has the same dimension as that of $(F_a)_N$.
Since the fibre of $\widetilde{\chi}_N$ over $\chi(a)$ is precisely the $G(\CC)$ orbit of
$\nabla_N \pi^{-1} (a) = J_N(\pi)^{-1} (\nabla_N(a))$, we see that
$\widetilde{F}_a = \widetilde{\chi}_N^{-1} (\chi(a))$.  Hence, the dimension of $F_a$ is equal to the
dimension of the orbit of $\nabla_N (\pi^{-1}(a)) = \dim G^\Delta - \dim G^\Delta_{\pi^{-1}(a)}$.
\end{proof}

\begin{Rk}
A similar calculation occurs in~\cite{BerZud} in which Bertrand and Zudilin show that the partial 
differential fields generated by Siegel modular forms have transcendence degrees governed by the 
groups acting on the covering spaces.
\end{Rk}

Our calculation of the dimension of the fibres suggests an alternate method to describe the fibres of the generalized logarithmic
derivative.  Let us introduce the notion of a generalized Hecke correspondence.

\begin{Def}
The commensurability group of $\Gamma$ is the subgroup of $G(\CC)$ defined by
$$\Gamma^{\text{comm}} := \{ \gamma \in G(\CC) ~:~ \gamma(U) = U ~\&~ [\Gamma:\Gamma^\gamma \cap \Gamma] < \infty ~\&~ [\Gamma^\gamma:\Gamma^\gamma \cap \Gamma] < \infty \} \text{ .}$$
For $\gamma \in \Gamma^{\text{comm}}$, the image of the graph of $\gamma \cdot:U \to U$ under $(\pi,\pi)$ is an algebraic correspondence $T_\gamma \subseteq
X \times X$ which we shall call the generalized Hecke correspondence associated to $\gamma$.
\end{Def}

\begin{Rk}
That the generalized Hecke correspondences are algebraic is well-known, but it may also be seen as a consequence of Theorem~\ref{PSGAGA}. 
\end{Rk}

Let us note that the fibres of $\chi$ are closed under the generalized Hecke operators.

\begin{prop}
If $a, b \in X(\UU)$ and there is some $\gamma \in \Gamma^{\text{comm}}$ with $(a,b) \in T_\gamma(\UU)$, then
$\chi(a) = \chi(b)$.
\end{prop}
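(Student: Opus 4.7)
The plan is to apply Proposition~\ref{gachi}: it suffices, for some choice of a branch of $\pi^{-1}$ near $a$ yielding a germ $u = \pi^{-1}(a)$ in $U$, to produce an element $g \in G(\CC)$ with $\pi(g \cdot u) = b$. The natural candidate is $g = \gamma \delta$ for a suitable $\delta \in \Gamma$ selecting the correct sheet of the correspondence $T_\gamma$.

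First I would describe $T_\gamma$ on $\CC$-points. If $u_0 \in U$ and $a_0 = \pi(u_0)$, then the fibre of the first projection $T_\gamma \to X$ over $a_0$ is
\[
\{ \pi(\gamma \delta u_0) : \delta \in \Gamma \} \text{ ,}
\]
and (at a $u_0$ lying outside the thin locus where $\Gamma$ fails to act freely) a short computation gives $\pi(\gamma \delta u_0) = \pi(\gamma \delta' u_0)$ iff $\delta \delta'^{-1} \in \Gamma \cap \Gamma^\gamma$. Thus the fibre is parametrized by the coset space $(\Gamma \cap \Gamma^\gamma) \backslash \Gamma$, which is finite by the definition of $\Gamma^{\text{comm}}$; in particular $T_\gamma \to X$ is generically finite.

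Second, since we are working inside a differential field of germs of meromorphic functions on some connected polydisc $V$, I would view $a, b$ as holomorphic germs $V \to X(\CC)$ and, after shrinking $V$, select a holomorphic lift $u : V \to U$ with $\pi \circ u = a$. Shrinking $V$ further to avoid the proper Zariski closed branch locus of $T_\gamma \to X$, the hypothesis $(a(z), b(z)) \in T_\gamma(\CC)$ pointwise places $b(z)$ in the finite set $\{ \pi(\gamma \delta u(z)) : \delta \in R \}$ for a fixed set $R$ of coset representatives. The candidate holomorphic maps $z \mapsto \pi(\gamma \delta u(z))$ are pairwise locally distinct over the generic locus, so by continuity and the connectedness of $V$ a single $\delta \in R$ must work throughout. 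Setting $g := \gamma \delta \in G(\CC)$ then yields $b = \pi(g \cdot u)$ as germs, i.e.\ as elements of $X(\UU)$, and Proposition~\ref{gachi} concludes $\chi(a) = \chi(b)$.

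The main obstacle is establishing the constancy of $\delta$; the rest is essentially an unwinding of definitions. This is handled by analytic continuation along the connected domain $V$ together with the discreteness of $\Gamma$, once one has restricted to the open subset of $V$ on which $T_\gamma \to X$ is \'etale at $(a(z), b(z))$ and $\Gamma$ acts freely on the chosen lift, which is permissible since germs may be restricted arbitrarily.
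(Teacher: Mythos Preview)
Your proposal is correct and follows essentially the same approach as the paper: both arguments amount to showing that some branch of $\pi^{-1}(a)$ and some branch of $\pi^{-1}(b)$ lie in the same $G(\CC)$-orbit, then invoking the defining property of $\widetilde{\chi}$ (equivalently, Proposition~\ref{gachi}). The paper simply asserts the key step in one sentence---``by the construction of $T_\gamma$, for some choice of a branch of $\pi^{-1}$ and some $\delta \in \Gamma$, we have $\gamma \cdot \pi^{-1}(a) = \delta \cdot \pi^{-1}(b)$''---whereas you carefully justify it via the finite-sheeted description of $T_\gamma$ and a continuity/connectedness argument to pin down a single coset representative $\delta$; your extra work is sound but not strictly necessary at the level of detail the paper maintains.
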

\begin{proof}
By the construction of $T_\gamma$, for some choice of a branch of $\pi^{-1}$ and for some $\delta \in \Gamma$,
we have $\gamma \cdot \pi^{-1}(a) = \delta \cdot \pi^{-1} (b)$.  Hence, $\pi^{-1}(a)$ and $\pi^{-1}(a)$ are in the $G(\CC)$ orbit so
that $\widetilde{\chi}(\pi^{-1}(a)) = \widetilde{\chi}(\pi^{-1}(b))$ implying that $\chi(a) = \chi(b)$.
\end{proof}

If the commensurability group is large, then the fibres of $\chi$ are precisely the Kolchin closures of the generalized Hecke orbits.

\begin{prop}
Assume that $\Gamma^{\text{comm}}$ is large in the sense that every set of coset representatives of $\Gamma$ in $\Gamma^{\text{comm}}$ is
Zariski dense in $G(\CC)$.
Then for each $a \in X(\UU)$, the fibre $F_a$ is the Kolchin closure of the generalized Hecke orbit of $a$.
\end{prop}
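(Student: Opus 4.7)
The inclusion $\overline{H(a)}^{\text{Kolchin}} \subseteq F_a$ is immediate: the previous proposition gives $H(a) \subseteq F_a$, and $F_a$ is Kolchin closed as a fibre of the differential constructible function $\chi$.

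For the reverse inclusion, the plan is to reduce to a Zariski density statement in a suitable prolongation space. Applying Theorem~\ref{mainthm} together with the moreover clause of Proposition~\ref{difftoalgquot} and the noetherian characterization of the Kolchin closure from Section~\ref{prelim}, I would choose $N$ sufficiently large so that $\chi = \chi_N \circ \nabla_N$ for an algebraically constructible $\chi_N \colon \cA_N X \to Z$; so that $F_a = \nabla_N^{-1}((F_a)_N)$ for the constructible subset $(F_a)_N := \chi_N^{-1}(\chi(a))$; so that the fibre $\widetilde{(F_a)_N} := \widetilde{\chi}_N^{-1}(\chi(a))$ in $\cA_N Y$ coincides with the $G_\UU$-orbit of $\nabla_N(\tilde a)$ (where $\tilde a$ is a chosen branch of $\pi^{-1}(a)$); and so that $\overline{H(a)}^{\text{Kolchin}} = \nabla_N^{-1}(\overline{\nabla_N(H(a))}^{\text{Zariski}})$. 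With these choices, it suffices to show that $\nabla_N(H(a))$ is Zariski dense in $(F_a)_N$.

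The orbit morphism $\psi \colon G_\UU \to (\cA_N Y)_\UU$ defined by $\psi(g) := g \cdot \nabla_N(\tilde a)$ is an algebraic morphism of $\UU$-schemes whose scheme-theoretic image is the irreducible subvariety $V := \overline{\widetilde{(F_a)_N}}^{\text{Zariski}}$ (after restricting to an identity component of $G$ if necessary). For any set $S$ of coset representatives of $\Gamma$ in $\Gamma^{\text{comm}}$, the hypothesis asserts that $S$ is Zariski dense in $G(\CC)$, hence in $G_\UU$, and then a standard pullback argument (any closed subvariety of $V$ containing $\psi(S)$ pulls back to a closed subvariety of $G$ containing $S$, hence equals $G$) shows that $\psi(S)$ is Zariski dense in $V$, and a fortiori so is $\psi(\Gamma^{\text{comm}}) \supseteq \psi(S)$.

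The main obstacle, which I expect to be the most delicate step, is transferring this Zariski density from $V \subseteq \cA_N Y$ down to $(F_a)_N \subseteq \cA_N X$ across the analytic (not algebraic) covering map $J_N(\pi)$. My plan is to invoke the Peterzil--Starchenko theorem (Theorem~\ref{PSGAGA}) together with the definability of $J_N(\pi)|_{J_N(F)}$ from Lemma~\ref{jetdef}. Suppose, for contradiction, that a proper Zariski closed $T \subsetneq (F_a)_N$ contains $\nabla_N(H(a))$; then $J_N(\pi)^{-1}(T) \cap V \cap J_N(F)$ is a closed definable analytic subset of $V \cap J_N(F)$ containing $\psi(\Gamma^{\text{comm}}) \cap J_N(F)$, and its $\Gamma$-saturation inside $V \cap J_N(U)$ (using the $\Gamma$-invariance of $J_N(\pi)^{-1}(T)$) is a $\Gamma$-invariant closed analytic subset of $V \cap J_N(U)$ containing $\psi(\Gamma^{\text{comm}})$. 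Applying Theorem~\ref{PSGAGA} to this set inside the algebraic variety $V$ yields a proper algebraic subvariety of $V$ containing $\psi(\Gamma^{\text{comm}})$, contradicting the Zariski density established above. Hence no such $T$ exists and $\nabla_N(H(a))$ is Zariski dense in $(F_a)_N$, which together with the chosen characterization of the Kolchin closure gives $F_a \subseteq \overline{H(a)}^{\text{Kolchin}}$.
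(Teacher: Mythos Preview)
Your overall strategy---reducing to a Zariski density statement at level $N$---is sound, and the first two paragraphs set things up correctly. The gap is exactly where you flagged it as delicate: the proposed application of Theorem~\ref{PSGAGA} does not go through. Once you compute the $\Gamma$-saturation of $J_N(\pi)^{-1}(T)\cap V\cap J_N(F)$, using that $V$ is $G$-invariant (hence $\Gamma$-invariant) and that $\Gamma\cdot J_N(F)=J_N(U)$, you get $J_N(\pi)^{-1}(T)\cap V\cap J_N(U)$. This set is closed analytic only in the \emph{open} subset $V\cap J_N(U)$ of $V$, and it is not definable, since $J_N(U)$ is an infinite $\Gamma$-union of definable sets. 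Theorem~\ref{PSGAGA} requires a definable closed analytic subset of a quasiprojective variety, so it does not apply. Without it you are stuck: a Zariski-dense countable set such as $\psi(\Gamma^{\text{comm}})$ can certainly lie inside a proper closed analytic (non-algebraic) subvariety of an open piece of $V$, so there is no contradiction.

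The paper avoids this obstruction by a different, more direct device: rather than trying to push density forward through the analytic map $J_N(\pi)$, it pulls the Kolchin closure back and runs a stabilizer argument. Set $V_N:=\overline{\nabla_N(H(a))}^{\text{Zar}}\subseteq\cA_N X$, choose an irreducible component $\widetilde{V}_N$ of $J_N(\pi)^{-1}(V_N)$, and let $H:=\{g\in G(\CC):g\cdot\widetilde{V}_N=\widetilde{V}_N\}$. Since $V_N$ is closed under all the algebraic Hecke correspondences, the full preimage $J_N(\pi)^{-1}(V_N)$ is $\Gamma^{\text{comm}}$-invariant; as $\Gamma$ permutes the components, for each $\gamma\in\Gamma^{\text{comm}}$ there is $\delta\in\Gamma$ with $\delta^{-1}\gamma\in H$, so $H$ contains a full set of coset representatives of $\Gamma$ in $\Gamma^{\text{comm}}$. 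The largeness hypothesis then makes $H$ Zariski dense in $G(\CC)$, from which one concludes $\widetilde{V}_N=\widetilde{\chi}_N^{-1}(\chi(a))$ and hence $V_N=(F_a)_N$. No second appeal to o-minimal GAGA is needed; the only place Theorem~\ref{PSGAGA} enters is upstream, in the construction of $\chi$ itself.
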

\begin{proof}
Let $a \in X(\UU)$ and let $V \subseteq X$ be the Kolchin closure of the generalized Hecke orbit of $a$.  Let
$N \gg 0$ be large enough so that $\chi = \chi_N \circ \nabla_N$ and there is an algebraic variety $V_N \subseteq \cA_N X$
with $V(\UU) = \nabla_N^{-1}(V_N(\UU))$ and $\nabla_N(V(\UU))$ is Zariski dense in $V_N$.   Let $\widetilde{V}_N$ be
a component of $J_N^{-1} V_N$.  Let $H := \{ \gamma \in G(\CC) ~:~ \gamma \cdot \widetilde{V}_N = \widetilde{V}_N \}$.
Since $J_N(\pi)^{-1} V_N$ is closed under the action of $\Gamma^{\text{comm}}$, we see that $H$ contains a set of coset representatives
for $\Gamma$ in $\Gamma^{\text{comm}}$.  By our hypothesis on $\Gamma^{\text{comm}}$, $H$ is dense in $G(\CC)$, and, hence, is equal to $G(\CC)$.
Thus, $\widetilde{V}_N = \widetilde{\chi}_N^{-1} (\chi(a))$ so that $V$ is defined by $\chi(x) = \chi(a)$.
\end{proof}

One might ask for a description of the set of points in $F_a$ which are algebraic over $a$.  We solve this problem only for the case that
$a$ is the generic point (in the sense of the Zariski topology) of $X$ and that $\CC(a) = \CC(X)$ is given a differential structure
for which $\CC(X)^\Delta = \CC$.

\begin{prop}
Let $K = \CC(X)$, the field of rational functions on $X$, given with a basis of $\CC$-derivations $\Delta = \{ \partial_1,
\ldots, \partial_n \}$. Fix an embedding of $K$ into $\UU$ over $\CC$. Let $a \in X(K)$ be the generic point, that is, the
$K$-rational point corresponding to the identity map $\operatorname{id}:X \to X$.  Then
$$F_a(K^{\text{alg}}) = \{ b \in X(\UU) ~:~ (\exists \gamma \in \Gamma^{\text{comm}}) (a,b) \in T_\gamma(\UU) \} \text{ .}$$
\end{prop}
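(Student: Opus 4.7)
The plan is to prove both inclusions by combining Proposition~\ref{gachi} with an analysis of the pullback of an algebraic correspondence to the covering $U \times U$.

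For the $(\supseteq)$ inclusion, suppose $(a,b) \in T_\gamma(\UU)$ with $\gamma \in \Gamma^{\text{comm}}$. Applying the preceding proposition gives $\chi(a) = \chi(b)$, so $b \in F_a(\UU)$. Furthermore, the first projection $T_\gamma \to X$ is finite of degree $[\Gamma : \Gamma \cap \Gamma^\gamma] < \infty$, so the generic point $a$ has only finitely many $T_\gamma$-images, each algebraic over $K$; hence $b \in F_a(K^{\text{alg}})$.

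For the $(\subseteq)$ inclusion, fix $b \in F_a(K^{\text{alg}})$ and a branch of $\pi^{-1}$ near $a$, setting $u_0 := \pi^{-1}(a) \in U$. By Proposition~\ref{gachi} there is $g \in G(\CC)$ with $g u_0 \in U$ and $\pi(g u_0) = b$. The plan is to show $g \in \Gamma^{\text{comm}}$, since then $(a,b) = (\pi(u_0), \pi(g u_0)) \in T_g(\UU)$ by the very definition of $T_g$. Because $b$ is algebraic over $K = \CC(a)$, there is an irreducible algebraic correspondence $T \subseteq X \times X$ with generic point $(a,b)$ whose two projections to $X$ are both dominant and finite. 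Consider its analytic preimage $\widetilde T := (\pi \times \pi)^{-1}(T) \subseteq U \times U$. Near $(u_0, g u_0)$ the map $u \mapsto (\pi u, \pi g u)$ sends a neighborhood of $u_0$ into $T$, since its value at $u_0$ is the generic point $(a,b)$ of $T$ and $T$ is closed; so the irreducible component $\widetilde T_0$ of $\widetilde T$ containing $(u_0, g u_0)$ locally coincides with the graph of $g$. Continuing this graph analytically along $U$ and using the algebraicity of $T$, I will argue that $\widetilde T_0$ is the full graph of $g|_U$, forcing $g(U) \subseteq U$; running the symmetric argument with the transposed correspondence and $g^{-1}$ in place of $g$ gives $g^{-1}(U) \subseteq U$, so $g(U) = U$.

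Finally, the $\Gamma \times \Gamma$-action on $\widetilde T$ sends $\widetilde T_0$ to the graph of $\gamma_2 g \gamma_1^{-1}$, so the stabilizer of $\widetilde T_0$ is $\{(\gamma, g\gamma g^{-1}) : \gamma \in \Gamma \cap \Gamma^g\}$ where $\Gamma^g := g^{-1}\Gamma g$. A degree-counting argument, comparing the composition $\widetilde T_0 \to T \to X$ against the $\Gamma$-cover $\widetilde T_0 \cong U \to X$, then yields $[\Gamma : \Gamma \cap \Gamma^g] < \infty$ via the first projection of $T$ and $[\Gamma^g : \Gamma \cap \Gamma^g] < \infty$ via the second; thus $g \in \Gamma^{\text{comm}}$. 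The hardest part will be the identification of $\widetilde T_0$ with the full graph of $g|_U$ (equivalently $g(U) = U$): a priori, analytic continuation of the germ of $g$ along a path in $U$ could leave $U \times U$ at a point where $g u \not\in U$, and ruling this out will require a careful interplay between the algebraicity of $T$ and the covering theory of $\pi$.
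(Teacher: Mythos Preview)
Your approach matches the paper's for both inclusions: the $(\supseteq)$ direction is handled identically, and for $(\subseteq)$ both arguments produce $g\in G(\CC)$ via Proposition~\ref{gachi}, pass to the algebraic locus $T$ (the paper calls it $D$) of $(a,b)$ in $X\times X$, and then argue that the graph of $g$ lands in $(\pi\times\pi)^{-1}T$.

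There are two differences worth noting. First, to get the local graph of $g$ inside $\widetilde T$ you invoke genericity of $(a,b)$ in $T$ directly; the paper instead uses the hypothesis that $\Delta$ is a \emph{basis} of $\CC$-derivations (equivalently $\CC(a)^\Delta=\CC$) to argue that the Jacobian $(\partial_i a_j)$ has rank $\dim X$, so the germ of $(a,b)$ cannot sit in a proper analytic subvariety of the local image $\Upsilon$, forcing $\Upsilon\subseteq D$. Your route is a bit cleaner here, though note that it is precisely the identity-map interpretation of $a$ (together with the basis hypothesis) that makes both versions work.

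Second, for the passage from ``the image of the graph of $g$ under $(\pi,\pi)$ is contained in an algebraic correspondence of dimension $\dim X$'' to ``$g\in\Gamma^{\text{comm}}$'', the paper simply asserts this in one clause (``which is only possible for $g\in\Gamma^{\text{comm}}$'') without further argument. Your degree-counting outline via the stabilizer of $\widetilde T_0$ under $\Gamma\times\Gamma$ is more explicit than anything the paper supplies, and the difficulty you flag---showing $g(U)=U$, or equivalently that analytic continuation of the germ of the graph does not exit $U\times U$---is genuine and is not addressed in the paper's proof either. So on this point you have not missed a trick that the paper uses; you have simply been more honest about what remains to be checked.
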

\begin{proof}
We have already noted $F_a(\UU)$ contains all points which are in generalized Hecke correspondence with $a$ and because
each generalized Hecke correspondence is a finite-to-finite correspondence, all such points are algebraic over $K$.  Hence, we need
only verify the left to right inclusion.

Suppose that $b \in F_a(K^{\text{alg}})$.  Geometrically, we may represent $b$ by a dominant, generically finite,
rational map $b:X' \dashrightarrow X$ where $X'$ is an irreducible complex algebraic variety.
Consider some nonempty connected open set $V' \subseteq X'(\CC)$ for which the covering $\pi:U \to X(\CC)$
trivializes over $V := b(V')$ and $V \cong \BB^{\dim(X)}$ is a coordinate chart.
Let $\widetilde{V} \subseteq U$ be one of the components of $\pi^{-1} V$.  Let
us work with the branch of $\pi^{-1}$ which on $V$ takes values in $\widetilde{V}$.
By Proposition~\ref{gachi}, there is some
$g \in G(\CC)$ with $\pi^{-1}(b) = g \cdot \pi^{-1}(a)$.  Hence, $(a,b)$ (regarded now a point  in
$\operatorname{Mor}(V',V \times V) \subseteq (X \times X)(\cM(V')) \subseteq (X \times X)(\UU)$)
  lies on the analytic subset of $V \times V$, which we shall call
$\Upsilon := (\pi,\pi) (\{ (x,y) \in \widetilde{V} \times \widetilde{V} ~:~ g \cdot x = y \})$.
Let $D \subseteq X \times X$ be the algebraic locus of $(a,b)$ over $\CC$, that is,
$D$ is the smallest $\CC$-variety which contains the point $(a,b)$.   The analytic variety $\Upsilon$
being the graph of a function is irreducible.  Hence, either  $\dim(\Upsilon \cap D) < \dim(X)$ or $\Upsilon \subseteq D$.
The former condition is impossible by our hypothesis that $\CC(a)^\Delta = \CC$:  writing $a = (a_1,\ldots,a_{\dim(X)})$
and $b = (b_1,\ldots,b_{\dim(X)})$, with respect to coordinates on $V$, then the hypothesis on the
constants is equivalent to the assertion that the matrix $(\partial_i a_j)$ has rank $\dim(X)$.  A further
analytic relation would force the rank to be $\leq \dim(\Upsilon \cap D) < \dim(X)$.   Thus, $\Upsilon \subseteq D$
implying that $\{ (x,y) \in U \times U ~:~ g \cdot x = y \} \subseteq (\pi \times \pi)^{-1} D$ so that the image of the
graph of the action of $g$ is algebraic which is only possible for $g \in \Gamma^{\text{comm}}$.

\end{proof}

\section{Some covering maps with algebraic generalized logarithmic derivatives}
\label{examplessect}

In this section we specialize Theorem~\ref{mainthm} to some concrete cases of
complex algebraic varieties admitting suitable covering maps.   In each case,
we are required to check that the covering map is definable in some o-minimal
expansion of the real field on some fundamental domain.

\subsection{Logarithmic derivatives}
Let us begin with the example with which we introduced this paper.
As we have already noted, the usual logarithmic derivative
$\frac{d}{dz} \log:\Gm \to \Ga$   defined by $f \mapsto \frac{f'}{f}$ is clearly differential
algebraic and may be obtained from Theorem~\ref{mainthm} using the fact that the
restriction of $\exp:\CC \to \CC^\times$ to the set
$$F := \{ z  \in \CC ~:~ -2 \pi < \operatorname{Im}(z) < 2 \pi \}$$
is surjective and definable in $\RR_{\text{an},\exp}$ via the formula
$$\exp(z) = e^{\operatorname{Re}(z)} \cos(\operatorname{Im}(z)) + i e^{\operatorname{Re}(z)} \sin(\operatorname{Im}(z)) \text{ .}$$

Of course, in $\RR_{\text{an},\exp}$ the real exponential function is explicitly allowed as a definable function while the functions
$\cos(x)$ and $\sin(x)$ restricted to $[-2 \pi,2 \pi]$ are expressible as restricted analytic terms.

Let us recall the theory of logarithmic
derivatives for algebraic groups over the constants; for further details see~\cite{Kolchin-DAAG}.  We shall
work with differential fields $(K,\Delta)$ with $\Delta = \{ \partial_1,
\ldots, \partial_n \}$.   If $G$ is an algebraic
group defined over the constants $C = K^\Delta$, then the tangent
bundle of $G$ splits as a semi-direct product $TG = G \ltimes T_e G$ where
$T_e G$ is the tangent space to $G$ at the identity element $e \in G$.
The second part of the splitting $\eta: TG \to T_e (G)$  is given by $(g,v) \mapsto d(g^{-1} \cdot) v$
where we write the points on $T G$ as pairs $(g,v)$ consisting of a point $g$
of $G$ and a vector $v$ in the tangent space to $G$ at $g$.  Because
$G$ is defined over $C$,  we may identify $\tau_1 G$ with the $n^\text{th}$
fibre power of $T G$ over $G$  so that
$\nabla:G(K) \to \tau_1 G(K)$ gives a map of groups
$\nabla:G(K) \to (T G \times_G \cdots \times_G TG)(K)$.
The logarithmic derivative $\partial \log_G:G(M) \to (T_e G)^n(M)$ is given by
$x \mapsto \eta^{\times n}(\nabla(x))$.
The differential algebraicity of this logarithmic derivative is
clear from this construction, but as with the usual logarithmic derivative
this map may be interpreted analytically.

If $G$ is a connected complex algebraic
group, then Lie theory supplies a complex analytic exponential map
$\exp_G:T_e G(\CC) \to G(\CC)$.  In
fortuitous cases, for example when $G$ is commutative, the exponential map
is a covering map, but this is not always so.  With the next lemma, we
note that in the case that
$G$ is commutative, there is a definable (in $\RR_{\text{an},\exp}$) subset
$F \subseteq T_e G(\CC)$ for which $\exp_G \upharpoonright F$ is definable
and surjective.

\begin{lem}
\label{defexp}
If $G$ is a connected commutative complex algebraic group of dimension $g$, then there is a semialgebraic subset
$F \subseteq \CC^g \cong T_e G (\CC)$ for which $\exp_G \upharpoonright F:F \to G(\CC)$ is
surjective and definable in $\RR_{\text{an},\exp}$.
\end{lem}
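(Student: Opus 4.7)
My plan is to proceed via the Chevalley decomposition of $G$. Any connected commutative complex algebraic group fits in a short exact sequence $0 \to L \to G \to A \to 0$ with $L \cong \Gm^r \times \Ga^s$ a commutative linear algebraic group and $A$ an abelian variety of dimension $h = g - r - s$. Differentiating yields the exact sequence $0 \to T_e L \to T_e G \to T_e A \to 0$ of complex vector spaces, and since the quotient is a vector space one may fix a $\CC$-linear section $\sigma: T_e A \to T_e G$.

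For the fundamental domain, I would take $F_L := \{ z \in \CC^r : |\operatorname{Im}(z_i)| < 2\pi \text{ for all } i \} \times \CC^s \subseteq T_e L$, which is a semialgebraic set on which $\exp_L = \exp^{r} \times \operatorname{id}^{s}$ is surjective and $\RR_{\text{an},\exp}$-definable by the explicit formula recalled at the start of the section. For $A$ I would choose a bounded semialgebraic fundamental parallelotope $F_A \subseteq T_e A$ for the period lattice $\Lambda_A = \ker \exp_A$. Then set
\[
F := F_L + \sigma(F_A) \subseteq T_e G,
\]
which is semialgebraic.

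To see surjectivity of $\exp_G\!\upharpoonright\! F$, I would argue as follows. Given $x \in T_e G$ with image $\overline{x} \in T_e A$, pick $y \in F_A$ with $\exp_A(y) = \exp_A(\overline{x})$. Then $\exp_G(\sigma(y))$ and $\exp_G(x)$ have the same image in $A(\CC)$, so their ratio lies in $L(\CC) = \exp_L(F_L)$, i.e.\ equals $\exp_L(v) = \exp_G(v)$ for some $v \in F_L$. Commutativity of $G$ gives $\exp_G(x) = \exp_G(v + \sigma(y))$, and $v + \sigma(y) \in F$. For definability I would use this same formula $\exp_G(v + \sigma(y)) = \exp_L(v) \cdot \exp_G(\sigma(y))$: the first factor is $\RR_{\text{an},\exp}$-definable on $F_L$ as noted; the map $y \mapsto \exp_G(\sigma(y))$ is complex analytic on the bounded set $F_A$, so by covering $\overline{F_A}$ by finitely many closed polydiscs whose $\exp_G \circ \sigma$-images lie in affine charts of $G$, it is $\RR_{\text{an}}$-definable; and the group multiplication in $G$ is regular, hence semialgebraic.

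The main obstacle is actually packaged into the last step: one must confirm that the analytic map $\exp_G \circ \sigma : F_A \to G(\CC)$ is $\RR_{\text{an}}$-definable when $G$ has a nontrivial abelian quotient, which comes down to the known fact that the universal covering of an abelian variety is definable in $\RR_{\text{an}}$ when restricted to a bounded fundamental domain (implicit in the Peterzil–Starchenko framework). Once this is granted, the commutative structure of $G$ allows the non-compact directions contributed by $L$ to be handled separately by the real exponential, and the construction glues.
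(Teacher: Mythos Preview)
Your proposal is correct and follows essentially the same route as the paper: both use the Chevalley decomposition $0 \to L \to G \to A \to 0$, split $T_e G$ as $T_e L$ plus a complementary subspace (your $\sigma(T_e A)$ is the paper's $V$), take a bounded box $F_A$ in that complement and the usual strip $F_L$ for $L$, and express $\exp_G$ on $F_L + F_A$ as the $G$-product of $\exp_L$ with the restriction of $\exp_G$ to the bounded piece. The only remark is that the ``main obstacle'' you flag is less delicate than you suggest: the $\RR_{\text{an}}$-definability of $\exp_G \circ \sigma$ on $F_A$ follows directly from the boundedness of $F_A$ and the definition of $\RR_{\text{an}}$ (restricted analytic functions into affine charts), with no need to invoke the Peterzil--Starchenko machinery.
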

\begin{proof}
By the structure theory of algebraic groups, $G$ fits into an exact sequence
$$\xymatrix{ 0 \ar[r] & L \ar[r] & G \ar[r] & A \ar[r] & 0}$$
where $L$ is a commutative, connected linear algebraic group
and $A$ is an abelian variety.  As $\CC$ has characteristic zero,
$L$ is isomorphic to a product of additive and multiplicative groups (see~\cite{Serre}).
The tangent space at the origin of the additive group may be identified
with the additive group itself and relative to this identification,
the exponential map may be taken to be the identity map.  As noted above,
the usual complex exponential function
admits a semialgebraic fundamental domain on which the exponential function restricts to an $\RR_{\text{an},\exp}$-definable
function.  Hence, we may find a semialgebraic set $F_L \subseteq T_e L(\CC) \cong \CC^{\dim(L)}$
for which $\exp_L$ restricted to $F_L$ is surjective onto $L(\CC)$ and definable in $\RR_{\text{an},\exp}$.

Choose a subspace $V \leq T_e G(\CC)$ for which
$V \cap T_e L (\CC) = 0$ and $V + T_e L(\CC) = T_e G(\CC)$.   Since the fundamental domain for $\exp_A$ is bounded, if $F_A \subseteq V$
is any closed box for which the image of $F_A$ under the natural map $T_e G (\CC) \to T_e A (\CC)$ contains
a fundamental domain, then
the restriction of $\exp_G$ to $\{ 0_{T_e L (\CC) } \} \times F_A$ is explicitly $\RR_{\text{an}}$-definable and its
image under $G \to A$ maps onto $A$.   Let us set $F := F_L \times F_A$.  Then on $F$, $\exp_G$ may be expressed as the
sum (in $G$) of $\exp_L \upharpoonright F_L$ and $\exp_G \upharpoonright \{ 0_{T_e L (\CC)} \} \times F_A$ and is
therefore definable.  A simple diagram chase shows that the restriction of $\exp_G$ to $F$ is surjective.
\end{proof}

 The generalized Schwarzian on
$T_e G(\CC) \cong \CC^g$ (where $g = \dim G$) may be given in coordinates
by differentiation.  That is, $(x_1,\ldots,x_g) \mapsto (\partial_i (x_j))_{1 \leq i \leq n, 1 \leq j \leq g}$.
Theorem~\ref{mainthm} implies the differential algebraicity
of the generalized logarithmic derivative $\chi$ given $x \mapsto (\partial_i \log_G)_{i = 1}^n \in
T_e G(\CC)^n$ where
$\log_G:G \to T_e(G)$ is any branch of $\exp_G^{-1}$.  Note that for $M = \cM(V)$
a differential field of meromorphic functions on some connected complex manifold $V$
with $\dim V = n$, the fibres of $\chi$ and of $d \log_G$ agree.  Indeed,
from the analytic presentation we see that $\chi(f) = \chi(g)$ if and only
if there is some $c \in T_e G(\CC)$ with $g = \exp_G( c + \log_G(f)) =
\exp_G(c) + f$.  Since $\exp_G$ is surjective on complex points, we have that
$\chi(f) = \chi(g)$ if and only if $f - g \in G(\CC)$.

Curiously, because it is not always the case the Lie exponential map is a
covering map, it is not always possible to realize Kolchin's general
logarithmic derivative for connected
algebraic groups over $\CC$ via Theorem~\ref{mainthm}.

\subsection{Moduli spaces of abelian varieties and general arithmetic varieties}
Modular functions are the quintessential covering maps to which
Theorem~\ref{mainthm} apply and they generalize naturally to the
covering maps for moduli spaces of abelian varieties.

\subsubsection{Moduli of elliptic curves}
Let us recall some of the basic theory of modular functions and
moduli spaces of elliptic curves.  The reader may wish to consult~\cite{milneMF}
for more details.
We write the upper half plane as
$\fh := \{ z \in \CC ~:~ \operatorname{Im}(z) > 0 \}$
and regard $\fh$ as an open subset of ${\mathbb P}^1(\CC)$ on which the
algebraic group $\operatorname{GL}_2$ acts by linear fractional transformations.
Via this action, $\fh$ is preserved by $\operatorname{GL}_2^+(\RR)$,
the subgroup of $\operatorname{GL}_2(\RR)$ on which the determinant is
positive.   In particular, $\fh$ is preserved by $\operatorname{SL}_2(\ZZ)$.

For each $N \in \ZZ_+$, let $\Gamma(N)$ be the kernel of
the reduction modulo $N$ map $\operatorname{SL}_2(\ZZ) \to \operatorname{SL}_2(\ZZ/N\ZZ)$.
The quotient $\Gamma(N) \backslash \fh$ has the structure of
an affine algebraic variety which is usually called $Y(N)(\CC)$.
When $N > 2$, the quotient map $j_N:\fh \to Y(N)(\CC)$ is a covering map,
but for $N = 1$, $j_1$, which is usually denoted by $j$, the usual analytic
$j$-function, fails to be a covering map because it is ramified over two points.
The curve $Y(1)$ may be identified with $\AA^1$ and with the usual normalizations,
the two points over which $j$ is ramified are $0$ and $1728$ (with preimages
$\exp(\frac{\pi i}{3})$ and $i$, respectively).

If one takes $F := \{ z \in \fh ~:~ |\operatorname{Re}(z)| \leq \frac{1}{2}
\text{ and } \operatorname{Im}(z) \geq \frac{\sqrt{3}}{2} \}$, then
$F$ contains a fundamental domain for $j$ and the restriction of $j$ to
$F$ is definable in $\RR_{\text{an},\exp}$.  Indeed, the function
$\tau \mapsto q_\tau := \exp(2 \pi i \tau)$ maps $F$ to the disc of radius $\exp(- \pi \sqrt{3})$.
It is known that the $q$-expansion of the $j$-function is meromorphic on the unit disc with a
simple pole at the origin.   Hence, $j$ may be expressed as the composition of
the restriction of a meromorphic function to
the closed disc of radius $\exp( - \pi \sqrt{3})$ with $\exp(2 \pi i \tau)$, which as
we have noted above, is definable in $\RR_{\text{an},\exp}$ on vertical strips of bounded width.
As each of the modular functions $j_N$ factor through $j$, they, too, are definable when
restricted to $F$.

Consequently, for $N > 1$, Theorem~\ref{mainthm} applies directly showing that the
generalized logarithmic derivative $\chi:Y_0(N) \to Z$ defined as the usual Schwarzian
derivative appied to $j_N^{-1}$ is differential constructible.  For the $j$-function
itself, we may use the flexibility provided by the hypotheses of Theorem~\ref{mainthm}:
take $U$ to be the open subset of $\fh$ on which $j$ is a covering map and $X$ to be
$\AA^1 \smallsetminus \{ 0, 1728 \}$.

\subsubsection{Universal abelian schemes}
The definability of $j$ on a fundamental domain is part of a theorem of Peterzil and Starchenko
to the effect that the two-variable Weierstra{\ss} $\wp$-function is definable (in $\RR_{\text{an},\exp}$)
on a fundamental domain~\cite{PS-wp}.  That is, the covering maps associated to universal families of
elliptic curves may be treated by Theorem~\ref{mainthm}.  In~\cite{PS-theta}, Peterzil and Starchenko extend this
result showing
that the covering maps associated to the universal families of abelian varieties are definable in $\RR_{\text{an},\exp}$.

Let $g \in \ZZ_+$ be a positive integer and write $\fh_g$ for the Siegel space of symmetric $g \times g$ complex matrices whose imaginary parts are
positive definite.  For a fixed sequence $D = (d_1,\ldots,d_g)$ of positive integers with $d_1 \vert d_2 \vert \cdots \vert d_g$,
we obtain the notion of a polarization of type $D$.  For $\tau \in \fh_g$, we associate the complex torus
$X^D_\tau(\CC) := \CC^g / (\tau \ZZ^g + \operatorname{diag}(d_1,\ldots,d_g) \ZZ^g)$ which, because of the choice of polarization,
is actually an abelian variety.  There is a discrete subgroup $G_D$ of the symplectic group $\operatorname{Sp}_{2g}(\QQ)$ for which
the quotient $G_D \backslash \fh_g$ has the structure of an algebraic variety $\cA_{g}^D (\CC)$ (in this case, ``$\cA$'' does not refer
to an arc space).  As in the case of $g = 1$, the quotient map is ramified, but in passing to congruence subgroups one obtains
covering maps.  Theorem 1.1 of~\cite{PS-theta} asserts that
is a fundamental domain on which the covering map $\fh_g \to \cA_{g,D}(\CC)$ is definable.

The covering map for the abelian variety associated to $\tau \in \fh_g$ is itself uniformly defined.
That is, if we were to take $U := \fh_g \times \CC^g$, then to a pair $(\tau,z)$ we may associate the abelian variety $X^D_\tau$ and the
point $z \mod (\tau \ZZ^g + \operatorname{diag}(d_1,\ldots,d_g) \ZZ^g) \in X^D_\tau(\CC)$.  The quotient
$(G_D \ltimes \ZZ^{2g}) \backslash (\fh_g \times \CC^g)$ has the structure of an algebraic variety $\cX_{g}^D$ and fits into the following
commutative diagram

$$
\xymatrix{
\fh_g \times \CC^g \ar[d] \ar[rrr] & & & \fh_g \ar[d] \\
\cX^D_g(\CC) \ar[rrr] & & & \cA^D_g(\CC)
}
$$

Theorem 1.3 of~\cite{PS-theta} asserts that the covering map of $\cX^D_g$ is definable on a fundamental domain.  Passing to a finite cover of $\cA^D_g$ corresponding
to a suitable congruence subgroup (or, equivalently, to the choice of some level structure when interpreted in terms of the moduli problem),
the resulting map is a covering map to which Theorem~\ref{mainthm} applies directly.

\subsubsection{Arithmetic varieties}
Klingler, Ullmo and Yafaev have shown~\cite{KUY} that for \emph{any} arithmetic variety, the associated covering map is definable in $\RR_{\text{an},\exp}$ on
some fundamental domain.  Here, an arithmetic variety is a complex algebraic variety expressible as $\Gamma \backslash D$ where $D$ is a symmetric
Hermitian domain and $\Gamma$ is an arithmetic group.   This definability theorem supersedes the Peterzil-Starchenko definability result
for the covering maps for the moduli spaces of abelian varieties and includes the covering maps for all Shimura varieties.
However, the requirement that $D$ be a symmetric Hermitian domain precludes its application to the case of the universal
abelian variety or more generally to mixed Shimura varieties.   Gao has shown~\cite{Gao} that the covering maps associated to all
mixed Shimura varieties are $\RR_{\text{an},\exp}$-definable when restricted to some fundamental domain.  Thus,
from Theorem~\ref{mainthm} and the Klingler-Ullmo-Yafaev and Gao definability theorems,
we deduce that the generalized logarithmic derivatives associated to  arithmetic
varieties and to mixed Shimura varieties are differential constructible.

\subsection{Picard-Fuchs equations, periods and Manin homomorphisms}

Families of complex algebraic varieties often come equipped with period
mappings.  As we cannot say as much as we would like in full generality, we shall
confine ourselves to a na\"{\i}ve discussion referring the reader to~\cite{Griffiths}
for more details.

If $X$ is a complex algebraic variety and $\omega_1, \ldots, \omega_g$ is a basis of
global one-forms on $X$ and $\gamma_1, \ldots, \gamma_\ell$
is a basis of the free part of the integral homology group $H_1(X,\ZZ)$, then the matrix
$(\int_{\gamma_i} \omega_j) \in M_{\ell \times g} (\CC)$ is called a period matrix of $X$.  Of course,
changing bases would result in a different matrix so that the period matrix of $X$ is well defined only
up to the action of $\operatorname{GL}_g(\CC)$ on one side and $\operatorname{GL}_\ell(\ZZ)$ on the other.
It is a fact that if $X$ varies in an algebraic family, $X \to S$, and the forms $\omega_1, \ldots,\omega_g$ are also
taken to vary algebraically, then one may choose the homology classes $\gamma_i$ to vary semi-algebraically so that
locally on $S(\CC)$ the components of the period matrix are analytic functions.  The theory of Picard-Fuchs equations
shows that period matrices satisfies linear differential equations with coefficients from the function field of $S$.

We explain now how to deduce the existence of the Picard-Fuchs equations with coefficients in $\CC(S)$ from
the global $\RR_{\text{an},\exp}$-definability of some branch of functions extending
extending $s \mapsto \int_{\gamma_i} (\omega_j)_s$.  In fact, we shall show a little more: not only do the
period matrices satisfy linear differential equations over $\CC(S)$, but one may differentially constructibly
find linear differential operators so that for every differential field $M$ of meromorphic
functions extending $\CC(S)$ and every point $a \in S(M)$ the kernel of the associated linear
differential operator is exactly the $\CC$-vector space generated by the periods of $X_s$.

Let us note that it follows from the Peterzil-Starchenko theorem on the
definability of the covering maps for universal families of abelian varieties that for every algebraic family of abelian
varieties, global branches of the period functions are definable.  However, we do not know whether such an hypothesis on
definability holds for all families of varieties.

With the following definition we make precise what we mean by a local system and what it means for
such a local system to definably trivialize.

\begin{Def}
Let $S$ be a complex algebraic variety and $N$ and $g$ be two natural numbers. 
A \emph{$\ZZ^g$-local system} over $S$ is a subset $\Gamma \subseteq {\mathbb G}_{a,S}^N(\CC)
= S(\CC) \times \CC^N$ such that 
\begin{itemize}
\item for each $s \in S(\CC)$, $\Gamma_s \subseteq \CC^N$ is a subgroup isomorphic to $\ZZ^g$ and
\item $S(\CC)$ is covered by open sets $U \subseteq S(\CC)$ on which there are analytic functions
$\gamma_{1,U}, \ldots, \gamma_{g,U}:U \to \CC^N$ so that for all $s \in S(\CC)$ one has
$\Gamma_s = \sum_{i=1}^g \ZZ \gamma_{i,U}(s)$.
\end{itemize}

Fix an o-minimal structure $\RR_{\cF}$ expanding the real field.  In practice, 
one may take $\RR_{\exp,\text{an}}$. 
We say that $\Gamma$ \emph{definably trivializes} 
if there are definable (though not necessarily continuous) functions $\nu_1, \ldots, \nu_g:S(\CC) \to \CC^N$ so that
at each point $s \in S(\CC)$ one has $\Gamma_s = \sum_{i=1}^g \ZZ \nu_i(s)$.  
\end{Def}

We begin with an algebraic version of our theorem on Picard-Fuchs equations similar to Proposition~\ref{mainthmalg}.

\begin{prop}
\label{PFalg}
Let $S$ be a complex algebraic variety and 
$\Gamma \subseteq {\mathbb G}_{a,S}^N(\CC) = S(\CC) \times \CC^N$ a $\ZZ^g$-local system
over $S$ by which definably trivializes via definable functions 
$\nu_1, \ldots, \nu_g:S(\CC) \to \CC^N$.  
Then there is a
constructible function $\Phi:S \to \operatorname{Mat}_{N \times N}$ so that for each
$s \in S(\CC)$ the $\CC$-vector subspace of $\CC^N$ generated by $\Gamma_s$ is the
kernel of $\Phi(s)$.
\end{prop}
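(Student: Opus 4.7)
The plan is to apply Peterzil-Starchenko (Theorem~\ref{PSGAGA}) to promote an \emph{a priori} only definable locus of matrices annihilating the fibers of $\Gamma$ to an algebraic subvariety, and then to extract $\Phi$ as a constructible section of a projection. Without loss of generality we may take $S$ to be quasi-projective, working over an affine cover otherwise. Introduce
\[
Y := \{(s, M) \in S(\CC) \times \operatorname{Mat}_{N \times N}(\CC) : M\nu_i(s) = 0 \text{ for } i = 1, \ldots, g\},
\]
which is $\RR_{\cF}$-definable since each $\nu_i$ is. The crucial point is that $(s, M) \in Y$ if and only if $M$ vanishes on the complex span $V_s := \operatorname{span}_\CC(\Gamma_s) \subseteq \CC^N$, a condition that does not depend on any choice of generators for $\Gamma_s$.

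The main obstacle is verifying that $Y$ is closed complex analytic, despite the globally defined sections $\nu_i$ being typically discontinuous as witnesses to the monodromy of $\Gamma$. To overcome this I will use the local analytic trivializations guaranteed by the definition of a $\ZZ^g$-local system: on any open $U \subseteq S(\CC)$ on which $\Gamma$ admits continuous generating sections $\gamma_{1,U}, \ldots, \gamma_{g,U}$, one has $\operatorname{span}_\CC(\gamma_{i,U}(s)) = V_s = \operatorname{span}_\CC(\nu_i(s))$, so $Y \cap (U \times \operatorname{Mat}_{N \times N}(\CC))$ coincides with the closed analytic set cut out by the analytic equations $M\gamma_{i,U}(s) = 0$. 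Hence $Y$ is globally closed analytic, and Theorem~\ref{PSGAGA} promotes it to an algebraic subvariety of $S \times \operatorname{Mat}_{N \times N}$.

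Once $Y$ is algebraic, the construction of $\Phi$ will be standard algebraic geometry. Setting $d(s) := \dim_\CC V_s$, the fiber $Y_s$ is a linear space of dimension $N(N - d(s))$, so $d(\cdot)$ is recovered from the algebraic structure of $Y \to S$ and has constructible level sets in $S$. Define
\[
W := \{(s, M) \in Y : \operatorname{rk} M = N - d(s)\}.
\]
The inclusion $V_s \subseteq \ker M$ forces $\operatorname{rk} M \leq N - d(s)$ automatically, while the reverse inequality is the non-vanishing of suitable minors on each $d$-stratum, so $W$ is a constructible subset of $S \times \operatorname{Mat}_{N \times N}$. The projection $W \to S$ is surjective, since for any $s$ the linear projection of $\CC^N$ along $V_s$ onto any complement is an element of $W_s$. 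Constructible surjections between algebraic varieties over an algebraically closed field admit constructible sections by quantifier elimination for $\operatorname{ACF}$; any such section $\Phi: S \to W$ then satisfies $\ker \Phi(s) = V_s$ for every $s \in S(\CC)$, as required.
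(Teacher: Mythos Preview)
Your overall strategy is sound and takes a somewhat different route from the paper. The paper factors through the Grassmannian: it first shows the dimension strata $S_j = \{s : \dim_\CC(\CC\cdot\Gamma_s) \leq j\}$ are algebraic via Theorem~\ref{PSGAGA}, then shows the classifying map $\sigma_j: S_j \smallsetminus S_{j-1} \to G(j,N)$ is regular (PSGAGA applied to its graph), and finally builds $\Psi: G(j,N) \to \operatorname{Mat}_{N \times N}$ by noetherian induction on subvarieties of the Grassmannian. You instead apply Theorem~\ref{PSGAGA} once, directly to the incidence variety $Y$, which is a pleasant shortcut. Your verification that $Y$ is closed analytic by switching from the discontinuous $\nu_i$ to the local analytic generators $\gamma_{i,U}$ is exactly the right move, and the passage from algebraicity of $Y$ to constructibility of the level sets of $d(\cdot)$ and of $W$ is fine.

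The gap is in your last sentence. It is \emph{false} that constructible surjections between varieties over an algebraically closed field admit constructible sections, and quantifier elimination for $\operatorname{ACF}$ does not say this: QE tells you images of constructible sets are constructible, not that definable Skolem functions exist. The squaring map $\AA^1 \to \AA^1$, $y \mapsto y^2$, is already a counterexample, since any constructible subset of the irreducible curve $\{y^2 = x\}$ is finite or cofinite and neither projects bijectively to $\AA^1$. To finish you must actually use the structure of $W \to S$. One fix: on each stratum $T_j := \{s : d(s) = j\}$, the generic fibre of $Y$ is a linear subspace of $\operatorname{Mat}_{N \times N}$ defined over the function field $k(T_j)$, and its common kernel is a $j$-plane $V$ defined over $k(T_j)$; any $k(T_j)$-linear projection with kernel $V$ gives a rational section of $W$ over a dense open of $T_j$, and noetherian induction on $S$ handles the complement. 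This is essentially the paper's noetherian induction for $\Psi$, transplanted from $G(j,N)$ to $S$.
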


\begin{proof}
For each $j \leq N$, we denote by $G(j,N)$ the Grassmannian of $j$-dimensional linear subspaces
of ${\mathbb G}_a^N$.   We shall prove this proposition by first showing that the association
$\sigma:S \to \bigcup_{j=1}^g G(j,N)$ given by sending $s \in S(\CC)$ to the $\CC$-vector space
generated by $\Gamma_s$ is constructible.  Then, we shall observe that there is a constructible
function $\Psi:G(j,N) \to \operatorname{Mat}_{N \times N}$ so that for each $[V] \in G(j,N)(\CC)$
the kernel of $\Phi([V])$ is precisely $V$.  Our map $\Phi$ is then $\Psi \circ \sigma$.

For $1 \leq j \leq g$, let $S_j(\CC) := \{ s \in S(\CC) ~:~ \dim_\CC \CC \cdot \Gamma_s \leq j \}$.
Working in an open set $U \subseteq S(\CC)$ on which $\Gamma$ trivializes, we see that
$S_j(\CC) \cap U$ is an analytic set as it is defined by the vanishing of a collection of determinants
of certain minors of the matrix $(\gamma_{1,U}(s),\ldots,\gamma_{g,U}(s))$.   Hence, $S_j$ is analytic.
On the other hand, using the natural definition of dimension and the functions $\nu_1, \ldots, \nu_g$,
one sees that $S_j$ is definable as well.  Hence, by Theorem~\ref{PSGAGA}, $S_j$ is an algebraic
subvariety of $S$.

Likewise, for each $j$, the function $\sigma_j:(S_j \smallsetminus S_{j-1}) \to G(j,N)$
given by sending $s \in (S_j \smallsetminus S_{j-1})(\CC)$ to the code for the vector space
generated by $\Gamma_s$ is both analytic and definable, and, hence, a regular map.   The
function $\sigma:S(\CC) \to \bigcup_{j=1}^g G(j,N)(\CC)$ is simply the union of the various $\sigma_j$.

Let us prove by noetherian induction that if $Z \subseteq G(j,N)$ is an irreducible subvariety, then there is a
constructible function $\Psi_Z:Z \to \operatorname{Mat}_{N \times N}$ so that if $[V] \in Z$ is the
code for the vector space $V \leq {\mathbb G}_a^N$, then $V = \ker \Psi_Z([V])$.
Taking $Z = G(j,N)$, we obtain our desrired $\Psi$.
Let $[V] \in Z(\CC(Z))$ be the generic point.   By basic linear algebra, there is some
linear map $L:\CC(Z)^N \to \CC(Z)^N$ for which $\ker L = V(\CC(\ZZ))$.  Localizing, we
find $Z' \subsetneq Z$ a proper subvariety of $Z$ for which the coefficients of
$L$ are regular functions on $Z \smallsetminus Z'$.  By induction, there is a constructible function
$\Psi_{Z'}:Z' \to \operatorname{Mat}_{N \times N}$ with the desired properties.  Let $\Psi_Z$ be
given by $\Psi_{Z'}$ on $Z'$ and by associating $z \in Z \smallsetminus Z'$ the specialization of
$L$ to $z$ on $Z \smallsetminus Z'$.
\end{proof}

As with our proof of Theorem~\ref{mainthm}, we find our tighter Picard-Fuchs operators from
the ostensibly weaker algebraic Proposition~\ref{PFalg}.

\begin{theorem}
\label{PFdiff}
Let $S$ be a complex algebraic variety.   Give $\CC(S)$ the structure of a differential field by
fixing a basis of $\CC$-derivations.  Let $N$ and $g$ be two natural numbers. Suppose
that $\Gamma \subseteq {\mathbb G}_{a,S}^N(\CC) = S(\CC) \times \CC^N$ is a $\ZZ^g$-local system
over $S$ with local trivializations as in Proposition~\ref{PFalg}. Also as in Proposition~\ref{PFalg}, we
suppose moreover that $\Gamma$ definably trivializes via functions $\nu_1, \ldots, \nu_g:S(\CC) \to \CC^N$.
Then there is a differential constructible function $\Theta$ on $S$ taking values in a space of linear
differential operators so that for any differential field $M$ of meromorphic functions extending $\CC(S)$ and
point  $s \in S(M)$ the $\CC$-vector subspace of $\CC^N$ generated by $\Gamma_s$ is the
kernel of $\Theta(s)$.
\end{theorem}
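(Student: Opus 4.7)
My plan mirrors the deduction of Theorem~\ref{mainthm} from Proposition~\ref{mainthmalg}: pass from $S$ to a prolongation space $\tau_m S$, set up an auxiliary local system on $\tau_m S$ that records the $m$-jets of the flat sections of $\Gamma$, apply the algebraic Proposition~\ref{PFalg} at the prolongation level, and compose with $\nabla_m:S\to\tau_m S$ to obtain $\Theta$. Fix $m\in\NN$, to be chosen sufficiently large at the end. On each chart $U\subseteq S(\CC)$ on which $\Gamma$ admits analytic flat sections $\gamma_{1,U},\ldots,\gamma_{g,U}:U\to\CC^N$, the jets $J_m\gamma_{i,U}:J_m U\to J_m\CC^N$ are analytic. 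Writing $N':=\dim_\CC J_m\CC^N$ and identifying $J_m S(\CC)$ with $(\tau_m S)(\CC)$, I define a $\ZZ^g$-local system
\[
\widehat{\Gamma}\subseteq (\tau_m S)(\CC)\times\CC^{N'}
\]
by $\widehat{\Gamma}_\sigma:=\sum_{i=1}^g\ZZ\cdot J_m\gamma_{i,U}(\sigma)$ for $\sigma\in J_m U$; this is independent of the chart because the transitions between local trivializations lie in $\operatorname{GL}_g(\ZZ)$ and commute with the jet functor. To check that $\widehat{\Gamma}$ definably trivializes, note that although the $\nu_i$ need not be continuous, on each chart $U$ they agree, up to a locally constant $\ZZ^g$-translation, with the analytic $\gamma_{i,U}$; setting $\widehat{\nu}_i(\sigma):=J_m\gamma_{i,U}(\sigma)$ for $U$ a chart containing the base point of $\sigma$ therefore yields well-defined functions which, by Lemma~\ref{jetdef}, are $\RR_{\cF}$-definable on each piece of a suitable definable partition of $(\tau_m S)(\CC)$ and hence globally definable.

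Applying Proposition~\ref{PFalg} to $\widehat{\Gamma}$ on $\tau_m S$ produces a constructible function $\Phi_m:\tau_m S\to\operatorname{Mat}_{N'\times N'}$ whose kernel at each $\sigma$ is the $\CC$-span of the period jets. Under the canonical coordinates $(y^{(\alpha)})_{|\alpha|\le m}$ on $J_m\CC^N$, a linear form $\sum_\alpha A_\alpha y^{(\alpha)}$ corresponds to a linear differential operator $\sum_\alpha A_\alpha\partial^\alpha$ of order $\le m$, so the rows of $\Phi_m(\sigma)$ encode a system of such operators. Set $\Theta(s):=\Phi_m(\nabla_m s)$; this is differential constructible by construction. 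For $s\in S(M)$, functoriality of jets gives $J_m\gamma_{i,U}(\nabla_m s)=\nabla_m(\gamma_{i,U}(s))$, so $\Theta(s)$ annihilates each of the period functions $\gamma_{i,U}(s)\in M^N$. Choosing $m$ large enough that the $\CC$-span of the period jets has dimension exactly $g$ in $J_m\CC^N$ ensures that the kernel of $\Theta(s)$ in the space of $\CC^N$-valued meromorphic functions is precisely the $\CC$-span of $\Gamma_s$.

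The main obstacle is verifying that the formal ``jets of $\nu_i$'' assemble into a definable trivialization of $\widehat{\Gamma}$ despite the possible discontinuities of $\nu_i$: one must argue that the jumps of $\nu_i$ across the boundaries of the definable cells on which each $\nu_i$ is analytic contribute only by a locally constant $\ZZ^g$-valued cocycle, which disappears at the level of jets, so that the patched $\widehat{\nu}_i$ still fall within the scope of Lemma~\ref{jetdef}. Finding a single $m$ that is uniform for all the rank conditions required is then a noetherianity argument in the Kolchin topology, analogous to the moreover clause of Proposition~\ref{difftoalgquot}.
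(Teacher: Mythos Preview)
Your approach coincides with the paper's: lift $\Gamma$ to a $\ZZ^g$-local system $\Gamma^{(\ell)}$ on $\cA_\ell S$ via jets of the analytic trivializations, apply Proposition~\ref{PFalg} there, and set $\Theta(s)=\Phi(\nabla_\ell s)\circ\nabla_\ell$, with the level $\ell$ fixed by Kolchin noetherianity. Two minor corrections worth making: the relation between the definable $\nu_i$ and the analytic $\gamma_{i,U}$ on a chart is a locally constant $\operatorname{GL}_g(\ZZ)$ change of basis, not a $\ZZ^g$-translation; and the correct uniform condition on $m$ is not that the $\CC$-span of the period jets have dimension exactly $g$ (this fails whenever the periods are $\CC$-linearly dependent) but rather that the $\CC$-span of $\Gamma_s$ equal $\nabla_m^{-1}$ of the $M$-span of $\nabla_m(\Gamma_s)$, which is what the noetherian induction in the Kolchin topology actually delivers. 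Your explicit attention to why the lifted system $\widehat{\Gamma}$ definably trivializes goes beyond what the paper spells out---the paper simply asserts that Proposition~\ref{PFalg} applies to $\Gamma^{(\ell)}$.
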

\begin{proof}
Working by noetherian induction on the Kolchin topology on $S$, one sees that there is some
number $\ell \gg 0$ so that for any differential field $M$ of meromorphic function extending $\CC(S)$
and point $s \in S(M)$ the $\CC$-vector space generated by $\Gamma_s$ is the pre-image under
$\nabla_\ell$ of the $M$-vector space generated by $\nabla_\ell(\Gamma_s)$ in $\cA_\ell ({\mathbb G}_a^N)$.

Let us define $\Gamma^{(\ell)}$.  For $U \subseteq S(\CC)$ an open set over which $\Gamma$
trivializes and $t \in J_\ell(U) \subseteq \cA_\ell S(\CC)$, let $\Gamma_t^{(\ell)} := \sum_{i=1}^g \ZZ J_\ell(\gamma_{i,U})(t)$.
From the local patching for $\Gamma$, it is easy to see that $\Gamma^{(\ell)}$ is a $\ZZ^g$-local system over
$\cA_\ell S$.

Replacing $S$ by $\cA_\ell S$, $\Gamma$ by $\Gamma^{(\ell)}$, and $N$ by $N' := \dim \cA_\ell ({\mathbb G}_a^N)$,
Proposition~\ref{PFalg} applies giving a constructible function $\Phi: \cA_\ell({\mathbb G}_a^N) \to
{\mathbb G}_a^{N'}$ so that for any $t \in \cA_\ell S(M)$, the kernel of $\Phi(t)$ is the
$M$-vector space generated by $\Gamma_t^{(\ell)}$.   Define $\Theta$ on $S$ by $\Theta(s) := \Phi(\nabla(s)) \circ \nabla$.
Then for $s \in S(M)$,  $\ker \Theta(s)$ is the $\CC$-vector space generated by $\Gamma_s$.
\end{proof}

\begin{Rk}
If one drops either the hypothesis that $\Gamma$ is a $\ZZ^g$-local system (in the sense that 
it locally analytically trivializes) or the hypothesis that $\Gamma$ definably discontinuously 
trivializes, then the conclusion of Theorem~\ref{PFdiff} fails.  For example, if $E \to S$ is an 
elliptic scheme over an algebraic curve $S$, then there is a definable function 
$\rho:S(\CC) \to \CC$ so that for each $s \in S(\CC)$, $\rho(s)$ is a nonzero element of the
kernel of the exponential map $\exp_{E_s}:\CC \to E_s(\CC)$.  It is well-known that if 
$E \to S$ is not isotrivial, then $\rho$ cannot satisfy an order one algebraic 
differential equation (even where it is analytic). 
Theorem~\ref{PFdiff} does not apply as $\Gamma := \ZZ \cdot \rho \subseteq S(\CC) \times \CC$ 
is not a local system since it does not locally analytically trivialize near the discontinuities of 
$\rho$.   On the other hand, the Bessel function does give rise to a $\ZZ$-local system, but it
too satisfies an order two algebraic differential equation, but not one of order one.  The 
issue is that the local system does not definably trivialize.  

I thank D. Bertrand for suggesting these examples.  In the same commuication he suggests that in line 
with the classical Picard-Fuchs theory the hypotheses of Theorem~\ref{PFdiff} should correspond to the 
assertions that the associated differential equations are invariant under monodromy and have at worst
regular singularities.  
\end{Rk}

Let us apply Theorem~\ref{PFdiff} to the case of periods of abelian varieties.

\begin{cor}
\label{PFav}
Let $S$ be an irreducible complex algebraic variety and $X \to S$ an abelian scheme over $S$ of relative dimension $g$.
We regard $\CC(S)$ as a differential field by a fixing a basis $\partial_1, \ldots, \partial_n$ of commuting derivations on
$\CC(S)$.
Then there is a linear map $\widetilde{\Phi}:(\cA_{2g} {\mathbb G}_{a}^g)_S \to ({\mathbb G}_a^M)_S$ over $S$ so that
the kernel of $\Phi := \widetilde{\Phi} \circ \nabla_{2g}$ is the $\CC$-vector bundle generated by $\Gamma := \{ (s,z) \in S(\CC) \times
\CC^g ~:~ z \in \ker \exp_{X_s} \}$.
\end{cor}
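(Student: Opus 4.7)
The plan is to recognize Corollary~\ref{PFav} as a direct application of Theorem~\ref{PFdiff}, with the role of the local system played by the relative period lattice $\Gamma$. The hypotheses of Theorem~\ref{PFdiff} split into two verifications: that $\Gamma$ is a $\ZZ^{2g}$-local system inside ${\mathbb G}_{a,S}^g$, and that $\Gamma$ definably trivializes in $\RR_{\text{an},\exp}$. The precise bound $\ell = 2g$ on the order of prolongation in the statement is then extracted from a closer inspection of the proof of Theorem~\ref{PFdiff}.

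For the first verification, each fiber $\Gamma_s = \ker \exp_{X_s}$ is a rank-$2g$ lattice in $\CC^g$ by the uniformization of complex tori. Local analytic generators $\gamma_{1,U}, \ldots, \gamma_{2g,U}:U \to \CC^g$ over a simply connected $U \subseteq S(\CC)$ are produced by fixing a basis of $H_1(X_{s_0},\ZZ)$ at a basepoint $s_0 \in U$, parallel-transporting it along $U$ via the Gauss-Manin connection, and pairing the resulting cycles against a holomorphic frame of $\Omega^1_{X/S}$ trivialized on $U$.

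For the second verification, after (if necessary) passing to a finite \'etale cover $S' \to S$ on which $X \to S$ acquires a polarization of some type $D$ and enough level structure to define a classifying map $f:S' \to \cA_g^D$, the Peterzil-Starchenko theorem on theta functions~\cite{PS-theta} supplies a definable fundamental domain for the covering $\fh_g \times \CC^g \to \cX_g^D(\CC)$. Pulling back by $f$ expresses $\Gamma'_s$ as the $\ZZ$-span of the columns of $(\tau_{f(s)}, \operatorname{diag}(d_1,\ldots,d_g))$, both of which become definable in $s$. Definable choice in $\RR_{\text{an},\exp}$ then yields global (possibly discontinuous) definable generators $\nu_1, \ldots, \nu_{2g}:S'(\CC) \to \CC^g$; composing with a definable section of $S'(\CC) \to S(\CC)$ transfers the trivialization back to $S(\CC)$ itself, since the $\CC$-span of $\Gamma$ is intrinsic to $S$ and unchanged by the cover.

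With both hypotheses in hand, Theorem~\ref{PFdiff} produces the required differential constructible operator $\Theta$. To obtain the explicit form $\Phi = \widetilde{\Phi} \circ \nabla_{2g}$, I would revisit the argument of Theorem~\ref{PFdiff} and observe that the prolongation level in that proof need only be high enough for the jets $\nabla_\ell(\gamma_{i,U}(s))$ of the $2g$ generators to span their $\CC$-linear closure; since this closure has dimension at most $2g$, taking $\ell = 2g$ already suffices. The main obstacle I expect is the passage to the \'etale cover in the second step, where one must carefully check that the definability hypothesis survives pullback and that the constructible conclusion on $S'$ descends to $S$; both follow from standard finite definability arguments but require some bookkeeping.
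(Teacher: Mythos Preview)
Your approach is essentially the same as the paper's: invoke the Peterzil--Starchenko definability result for universal families of abelian varieties, pull back the definable trivialization along the classifying map, and feed this into Theorem~\ref{PFdiff}. The paper's proof is two sentences and omits precisely the details you supply --- it neither spells out the local-system verification, nor the passage through a polarized cover with level structure, nor the justification for the specific bound $\ell = 2g$; your treatment of each of these is sound and fills genuine gaps in the exposition.
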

\begin{proof}
By the main theorem of~\cite{PS-theta}, when $X \to S$ is a universal family of abelian varieties, there is a definable (discontinuous)
splitting of $\Gamma$.  For a general family of abelian varieties, pull back the definable splitting from the universal family.
\end{proof}

Corollary~\ref{PFav} justifies Manin's analytic construction of differential additive characters on abelian varieties~\cite{Manin}.

\begin{cor}
Let $S$ be an irreducible complex algebraic variety and $X \to S$ an abelian scheme over $S$ of relative dimension
$g$.  Then there is a map of
differential algebraic groups over $S$, $\mu:X \to ({\mathbb G}_a^N)_S$ (for some $N$)  so that for any differentially
closed field $(\UU,\partial_1, \dots, \partial_n) \supseteq (\CC(S),\partial_1,\ldots,\partial_n)$ and any
point $s \in S(\UU)$, the differential function field of the kernel of $\mu_s:X_s(\UU) \to {\mathbb G}_a^N(\UU)$
has transcendence degree at most $2g$.
\end{cor}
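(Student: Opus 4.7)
The plan is to construct $\mu$ directly from the Picard-Fuchs operator supplied by Corollary~\ref{PFav}, so that this corollary follows as a formal consequence. First, I invoke Corollary~\ref{PFav} to obtain the $\CC$-linear differential algebraic map $\Phi = \widetilde{\Phi}\circ\nabla_{2g}:(\Ga^g)_S \to (\Ga^M)_S$ whose kernel is generated over $\CC$ by sections of the period local system $\Gamma$ with fibre $\Gamma_s = \ker\exp_{X_s}$. Because $\Phi$ is $\CC$-linear and annihilates $\Gamma$, it descends through the relative exponential: for $x \in X_s(\UU)$, realized via Seidenberg's embedding theorem as an analytic section over some open $V\subseteq\CC^n$, pick a local analytic lift $\tilde{x}\in\CC^g$ of $x$ and set $\mu_s(x) := \Phi(\tilde{x})$. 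Two such lifts differ by a section of $\Gamma_s$ and so give the same value; choosing the lift of a sum as the sum of lifts and using the $\CC$-linearity of $\Phi$ makes $\mu$ a homomorphism of differential algebraic groups over $S$.

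Next I would verify that $\mu$ is differential algebraic and not merely differential analytic. This follows by exactly the o-minimal GAGA pattern used in the proof of Theorem~\ref{mainthm}: pulling back the Peterzil-Starchenko theorem of~\cite{PS-theta} from the universal family to the given abelian scheme $X\to S$ produces an $\Ranexp$-definable fundamental domain on which the relative exponential, and hence the branch of $\exp^{-1}$ used in defining $\tilde{x}$, is definable. Composing this definable branch with the algebraic operator $\widetilde{\Phi}\circ\nabla_{2g}$ presents the graph of $\mu$ as an $\Ranexp$-definable closed complex analytic subset of the ambient algebraic variety, and Theorem~\ref{PSGAGA} then forces the graph to be algebraic.

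Finally, for the transcendence degree bound, I describe $\ker\mu_s$ explicitly. A point $x \in X_s(\UU)$ lies in $\ker\mu_s$ precisely when some lift $\tilde{x}$ lies in $\ker\Phi \subseteq (\Ga^g)_\UU$, whose solutions form a $\CC$-vector space spanned by the $2g$ period sections; this space has $\CC$-dimension $d \leq 2g$, so as a differential subvariety $\ker\Phi$ is isomorphic to $(\Ga^d)^\Delta$, with differential function field of transcendence degree $d$ over $\UU$. Since $\ker\mu_s = \exp_{X_s}(\ker\Phi)$, its differential function field embeds via $\tilde{x}$ into that of $\ker\Phi$, and hence has transcendence degree at most $2g$ over $\UU$.

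The main obstacle I anticipate is the transfer of the Peterzil-Starchenko definability from the universal family to the given abelian scheme $X\to S$; this is the same issue already addressed in the proof of Corollary~\ref{PFav}, so once it is granted the remaining chain of deductions is essentially formal.
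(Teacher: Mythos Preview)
Your proposal is correct and follows essentially the same route as the paper: define $\mu := \Phi \circ \log_X$ (your $\Phi(\tilde{x})$ with $\tilde{x}$ a local lift), observe that $\Gamma$-ambiguity is killed by $\Phi$, verify differential algebraicity by the o-minimal GAGA argument of Theorem~\ref{mainthm} using the Peterzil--Starchenko definability of the relative exponential, and bound the kernel via the fact that $\ker\Phi$ is a $\CC$-vector space of dimension at most $2g$. The paper's proof is terser but structurally identical.
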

\begin{proof}
The exponential map $\exp_X:S(\CC) \times \CC^g \to X(\CC)$ is an analytic covering map over $S$.  As such, it has a local analytic
inverse $\log_X:X(\CC) \to S(\CC) \times \CC^g$.  We shall abuse notation somewhat by regarding $\log_X$ as a map to $\CC^g$.
The many valued function $\log_X$ is well defined up to addition by $\Gamma$.  Hence, if $\Phi$ is the linear
 differential map of Corollary~\ref{PFav}, then $\mu := \Phi \circ \log_X$
  is a well-defined mapping.  Arguing as in the proof of Theorem~\ref{mainthm},
 namely by expressing $\Phi \circ \log_X$ as a composite of regular maps on jet spaces with $\nabla_{2g}$,
 we see that $\mu$ is differential algebraic and from its construction because the fibres of $\Phi$ are vector spaces
 over the constants of dimension at most $2g$, it follows that the fibres of $\mu$ have dimension at most $2g$.
\end{proof}

\section{Concluding remarks and questions}
\label{cq}

Let us conclude with some natural questions raised by our construction of generalized
logarithmic derivations.

\subsection{Differential equations for covering maps}

Let $\pi:U \to X(\CC)$ be a covering map as in the statement of Theorem~\ref{mainthm}.
Let $M = \cM(U)$ be the field of meromorphic functions on $U$ treated as differential field
with respect to some choice of a basis of derivations $\partial_1,\ldots,\partial_n$.
Let $t \in U(M)$ be the $M$-rational point corresponding to the identity map $U \to U$.
Then $\pi(t) \in X(M)$ ``is'' $\pi$ regarded as an $M$-rational point of $X$.  From Theorem~\ref{mainthm}
we know that $\pi(t)$ satisfies the algebraic differential equation $\chi(\pi(t)) = \widetilde{\chi}(t)$.
Noting that $t$ is the restriction of the generic point of $Y$ to $U$ and that the derivations
on $M$ may be chosen to be the restriction of a basis of derivations of $\CC(Y)$, we
see that the differential field generated by $\pi(t)$ over $\CC(Y)$  has transcendence degree at most
$\dim G$ over $\CC(Y)$.  Since $\CC(Y)$ is a differential field of finite transcendence degree over $\CC$, it follows
that the differential field generated by $\pi(t)$ over $\CC$ has finite transcendence degree.

However, in some cases, for example for the analytic $j$-function $j:\fh \to \AA^1(\CC)$ and for exponential functions
$\exp_G:\CC^g \to G(G)$ of connected, commutative algebraic groups, with the usual differential structure,
the differential field generated by $\pi(t)$ over
$\CC$ has transcendence degree exactly $\dim(G)$ over $\CC$.  That is, $\widetilde{\chi}(t)$ is a constant point.

\begin{question}
Is it always the case that there is some choice of a basis of $\operatorname{Der}(\CC(Y)) = \operatorname{Hom}_{\CC(Y)}(\Omega_{\CC(Y)/\CC},\CC(Y))$
so that $\widetilde{\chi}(t)$ is a constant point?   If not, under what conditions is this the case?
\end{question}

\subsection{Classification theory and the fibres of $\chi$}

In~\cite{FS}, Freitag and the present author show that the differential equation satisfied by the $j$-function is strongly minimal
and has trivial forking geometry.  It is not unreasonable to expect that other generalized logarithmic derivatives will produce examples
of types with trivial forking geometry but complicated binary structure coming from generalized Hecke correspondences.  We do not
propose a precise statement of a conjecture on the classification theoretic structure of fibres of generalized logarithmic derivatives
as that would involve delineating the ways in which sets nonorthogonal to the constants and to Manin kernels may arise.   

\begin{problem}
Describe the classification theoretic structure (\emph{i.e.}, an analysis in minimal types placing these types into their positions
relative to the Zilber trichotomy) of fibres of generalized logarithmic derivatives in terms of group theoretic data of the covering
spaces.
\end{problem}

\subsection{Ax-Schanuel problems}
In~\cite{Ax}, Ax proved a function field version of Schanuel's conjecture on algebraic relations amongst exponentials as a
purely differential algebraic theorem.  In~\cite{PiTs}, Pila and Tsimmerman prove a variant of Ax's theorem for the $j$-function.
They observe that via the Seidenberg embedding theorem, their Schanuel-like theorem for the $j$-function admits a differential
algebraic formulation, though their proof involves other techniques.   Our differential equations permit the differential algebraic formulation
of general transcendence conjectures for covering maps.  As we have not resolved the question of which ``obvious'' relations might hold, we do
not suggest a precise general conjecture here.

\bibliographystyle{siam}
\bibliography{ade}
\end{document}